\newtheorem{theorem}{Theorem}
\newtheorem{proposition}[theorem]{Proposition}
\newtheorem*{prop}{Proposition}
\newtheorem{lemma}[theorem]{Lemma}
\begin{document}
   \title{Disjointness of representations arising in harmonic analysis
    on the infinite-dimensional unitary group}
 \date{11 July}
 \author{Vadim Gorin\thanks{e-mail: vadicgor@rol.ru} \\
       Moscow State University.}
\maketitle

\begin{abstract}
 We prove pairwise disjointness of representations $T_{z,w}$ of the infinite-dimensional unitary group.
 These representations provide a natural generalization of the regular representation for the case of "big" group $U(\infty)$.
 They were introduced and studied by G.Olshanski and A.Borodin.
 Disjointness of the representations can be reduced to disjointness of certain probability measures on the space of paths in
 the Gelfand-Tsetlin graph. We prove the latter disjointness using probabilistic and combinatorial methods.
\end{abstract}

\section*{Introduction}

The aim of harmonic analysis on a group is to study regular and
another natural representations connected with this group. In this
paper we work with infinite-dimensional unitary group $U(\infty)$
that is the union of classical groups $U(N)$ naturally embedded one
into another. Since $U(\infty)$ is not a locally compact group, the
conventional definition of the regular representation is not
applicable to it. In the pioneer paper \cite{Pickrell} Pickrell in a
similar situation proposed a construction to deal with this
difficulty. It was the starting point of \cite{Olsh} and \cite{BO}
where Borodin and Olshanski introduced a natural generalization of
the regular representation for the case of the ``big'' group
$U(\infty)$.

The space of the representations introduced by Borodin and Olshanski
is an inductive limit of the spaces $L_2(U(N))$. In order to
correctly define this inductive limit one has to choose some
embeddings $L_2(U(N))\hookrightarrow L_2(U(N+1))$. It turns out that
this choice is not unique. Different possible choices lead to a
family $T_{z,w}$ of the representations which depend on two complex
parameters $z,w$. The representation $T_{z,w}$ does not change if
$z$ or $w$ is replaced by $\overline z$ or $\overline w$,
respectively.  The structure of $T_{z,w}$ substantially depends on
whether parameters are integers or not. In the present paper we
handle the latter case.

Since all representation $T_{z,w}$ are inductive limits of the very
same sequence of spaces a very natural question arises: Are these
representations really different? Our paper provides an answer. We
prove that representations $T_{z,w}$ are disjoint. Recall that two
representations $T$ and $T'$ are called disjoint if they have no
equivalent nonzero subrepresentations. In this paper we prove the
following theorem:

\begin{theorem}
 Suppose that all parameters $z,w,z',w'$ are not integers,
 $\{z,\overline z\} \neq \{z',\overline {z'}\}$,
 $\{w,\overline w\}\neq\{w',\overline {w'}\}$ (all pairs are non-ordered),
 $\Re(z+w)>-1/2$,
 then representations
 $T_{z,w}$ and $T_{z',w'}$ are disjoint.
\end{theorem}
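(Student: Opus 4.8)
\section*{Outline of the proof}

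\emph{Step 1 (reduction to singularity of measures).} As indicated in the introduction, the decomposition of $T_{z,w}$ into irreducibles of $U(\infty)$ is encoded by a probability (spectral) measure $P_{z,w}$ on the path space $\mathcal T$ of the Gelfand--Tsetlin graph — equivalently, on its boundary $\Omega$ — and $T_{z,w}$, together with the $U(\infty)$-action, can be realized using $P_{z,w}$ as a direct integral over $\Omega$ of the (pairwise disjoint) boundary characters. Hence $T_{z,w}$ and $T_{z',w'}$ are disjoint as soon as $P_{z,w}$ and $P_{z',w'}$ are mutually singular, and it is this singularity I shall establish. Recall that $P_{z,w}$ is the unique central (Gibbs) measure on $\mathcal T$ whose marginal on the $N$-th level is the explicit $zw$-measure $P^{(N)}_{z,w}$ on the set $\mathbb{GT}_N$ of signatures; crucially, the down-transition probabilities of $P_{z,w}$ along $\mathcal T$ are the parameter-free $\dim/\dim$ ratios, so the measures $P_{z,w}$ for different parameters differ only ``in the direction of infinity''.

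\emph{Step 2 (a separating family of tail statistics).} I shall look for tail-measurable functionals of the path — equivalently, Borel functionals of the limiting boundary point $\omega\in\Omega$ — which are $P_{z,w}$-almost surely equal to deterministic quantities $I_1(z,w),I_2(z,w),\dots$ built from $\Re z,\Re w,|z|,|w|$. Under the Borodin--Olshanski identification of $\Omega$ with locally finite configurations on the half-integer lattice $\mathbb Z+\tfrac12$, the pushforward of $P_{z,w}$ is the determinantal point process with the explicit hypergeometric correlation kernel $\mathcal K_{z,w}$; the proposed invariants are the limits of the density $\mathcal K_{z,w}(x,x)$ as $x\to\pm\infty$ and the rates (subleading terms) of these limits, which by the known hypergeometric asymptotics are explicit functions of the parameters, the hypothesis $\Re(z+w)>-1/2$ being exactly what places these asymptotics in the regime where the leading behaviour is recoverable. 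To each invariant one attaches an empirical statistic of the path — a suitably normalized count of particles (or of holes) of the configuration in a growing window, computed at level $N$ and then passed to the limit $N\to\infty$ — and shows it converges $P_{z,w}$-a.s. to $I_j(z,w)$: the convergence follows from a second-moment estimate (the variance of a linear statistic of a determinantal process is a sum of $2\times2$ minors of the kernel, bounded by the kernel asymptotics), Borel--Cantelli along a geometric subsequence of windows, and monotonicity to fill in between.

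\emph{Step 3 (conclusion).} Let $A$ be the event that the chosen empirical statistics along the path converge to $I_1(z,w),I_2(z,w),\dots$; then $P_{z,w}(A)=1$ by Step 2. It remains to check that the collected invariants separate admissible parameter sets, i.e.\ that $(\{z,\bar z\},\{w,\bar w\})\mapsto(I_1(z,w),I_2(z,w),\dots)$ is injective on the region where $z,w$ are non-integral and $\Re(z+w)>-1/2$. The hypotheses $\{z,\bar z\}\neq\{z',\bar{z'}\}$ and $\{w,\bar w\}\neq\{w',\bar{w'}\}$ then force $(I_j(z',w'))\neq(I_j(z,w))$, whence $P_{z',w'}(A)=0$; thus $P_{z,w}\perp P_{z',w'}$ and $T_{z,w},T_{z',w'}$ are disjoint.

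\emph{The main obstacle} is the interplay of Steps 2 and 3: one must isolate \emph{enough} tail-accessible invariants — enough terms of the $\pm\infty$ asymptotics of $\mathcal K_{z,w}$ — to recover the four real parameters $\Re z,|z|,\Re w,|w|$ from a single typical realization, and one must carry the determinantal second-moment/Borel--Cantelli argument through uniformly near the boundary $\Re(z+w)=-1/2$, where the relevant decay is slowest and the almost-sure convergence most delicate. A more combinatorial alternative would bypass $\Omega$ altogether: work directly with the $zw$-measures $P^{(N)}_{z,w}$ and the parameter-free cotransitions, choose a statistic of $\lambda^{(N)}$ whose conditional increment given $\lambda^{(N)}$ has an explicit $z,w$-dependent mean, and run a martingale law-of-large-numbers along the path; the separation-of-parameters step of Step 3 would then reappear as the final point.
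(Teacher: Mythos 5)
Your Step 1 is exactly the paper's reduction (disjointness of $T_{z,w}$ and $T_{z',w'}$ follows from mutual singularity of the spectral measures, hence of the central measures on the path space; this is Proposition 2 here), so that part is fine. But Steps 2--3 are a program rather than a proof, and the program's central claim is precisely the open point. The paper states explicitly that, although the correlation functions of the spectral measures are known in closed form, ``no direct ways to derive the disjointness of measures looking at their correlation functions are known'' --- and your proposal is exactly an attempt to do that. Two concrete obstructions. First, you never exhibit the separating invariants $I_j(z,w)$: the tail asymptotics of the density $\mathcal K_{z,w}(x,x)$ are governed essentially by $\Re(z+w)$ (cf.\ the tail/gamma-kernel analysis in [BO2]), and it is not at all clear that any list of tail-accessible functionals of a \emph{single} configuration recovers all four real parameters $\Re z,|z|,\Re w,|w|$; distinguishing, say, $z$ from $z'$ with $\Re z=\Re z'$ but $|z|\neq|z'|$ is the whole difficulty, and your own ``main obstacle'' paragraph concedes this without resolving it. Second, even granting suitable asymptotic expansions of the kernel, your concentration mechanism does not reach them: the $2\times2$-minor bound gives variance of a particle count in a window of size $L$ at best $O(L)$ (or $O(\log L)$ for sine-like rigidity), i.e.\ fluctuations $\gtrsim\sqrt{\log L}$, while the ``subleading terms'' you propose to read off contribute only $O(\log L)$ or $O(1)$ to the count; without a much stronger rigidity statement the invariants beyond the leading density are not almost surely recoverable from one realization.

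For contrast, the paper avoids constructing any separating tail invariant. It applies the Kakutani-type dichotomy (Proposition 3): it suffices to show that the likelihood ratio $P^{z,w}_N/P^{z',w'}_N$ along a random path almost surely fails to converge to a finite nonzero limit. This is done by a combinatorial path-surgery argument: on the set of paths where the ratio has stabilized, moving a single box of fixed content $k$ from one level to the next multiplies the ratio by $|(z'-k)/(z-k)|^2$ up to a factor tending to $1$, which is boundedly away from $1$ precisely when $\{z,\bar z\}\neq\{z',\bar{z'}\}$; performing this surgery at many disjoint locations produces many disjoint sets of equal measure, forcing the original set to be null. The parameter hypotheses enter only through this one explicit factor, so no injectivity-of-invariants step is ever needed. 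If you want to salvage your approach, your closing ``combinatorial alternative'' (a martingale law of large numbers for a statistic with $z,w$-dependent conditional increments) is the more promising direction, but it still requires producing the separating statistic, which is the step currently missing.
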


Decomposition of the representations $T_{z,w}$ into irreducible ones
is governed by some measures, we call them spectral measures.
Instead of the disjointness of the representations $T_{z,w}$ we may
prove the disjointness of the corresponding spectral measures. These
measures were fully described in the paper \cite{BO}. There exist
explicit formulas for the correlation functions of the spectral
measures. But, as far as the author knows, no direct ways to derive
the disjointness of measures looking at their correlation functions
are known. Thus, we have to use different approach here. We reduce
the disjointness of spectral measures to disjointness of certain
probability measures $\rho_{z,w}$ on the space of paths in the
Gelfand-Tsetlin graph, and we are mainly concerned with analysis of
the measures $\rho_{z,w}$.

Disjointness of representations and measures connected with ``big''
groups was discussed in numerous papers and there are still many
related problems which remain unsolved.

The authors of \cite{KOV} studied a family $T_z$ of representations
of the infinite symmetric group and obtained a result similar to
Theorem 1. Our reasoning uses some ideas from that paper, but the
case of the unitary group turned out to be much more complicated and
we have to use additional arguments.

Another closely related example is given in \cite{BO_Herm} where
Borodin and Olshanski proved the disjointness of the so-called
Hua-Pickrell $s$--measures on the space of infinite Hermitian
matrices.

In the paper \cite{Ner} Neretin generalized original Pickrell
construction to all ten classical series of compact Riemannian
symmetric spaces. The problem of the disjointness can be posed in
each of these ten cases. Perhaps, the disjointness can be proved
using methods similar to the ones of our paper.

Note that in Theorem 1 we have a restriction on the parameters
$z,w$: $\Re(z+w)>-1/2$. Actually, the construction of the
representations $T_{z,w}$ makes sense for all $z,w\in\mathbb C$. But
for $\Re(z+w)\le -1/2$ we get representations without so-called
distinguished $K$--invariant vector. It would be interesting to
study these representations too, but our methods are not applicable
for them.

In the second part of our paper we present a result of a different
kind: A path in the Gelfand-Tsetlin graph is a sequence $\{\lambda
(N)\}$ of dominant weights of unitary groups $U(N)$, and each
$\lambda (N)$ can be viewed as pair $(\lambda^+(N),\lambda^-(N) )$
of Young diagrams. Thus, each measure $\rho_{z,w}$ generates two
random sequences $\{\lambda^+(N)\}$,$\{\lambda^-(N)\}$ of Young
diagrams forming a Markov growth process. As an application of our
method we prove that the length of the diagonal in
$\{\lambda^\pm(N)\}$ grows at most logarithmically in $N$.

The author would like to thank G.~Olshanski for suggesting the
problem, numerous fruitful discussions and help in simplifying
proofs. Also the author is grateful to A.~Borodin who has found a
gap in the original proof. The author thanks B.~Gurevich and
S.~Pirogov  for helpful discussions and advice.

The author was partially supported by RFBR grant 07-01-91209, the
Moebius Contest Foundation for Young Scientists and Leonhard Euler's
Fund of Russian Mathematics Support.

\section{Representations of $U(\infty)$, Gelfand-Tsetlin graph, reduction to spectral measures}
In this section we collect some general facts about representations
of the group $U(\infty)$; most of them can be found in the paper
\cite{Olsh}.

Consider the chain of the compact classical groups $U(N)$,
$N=1,2,\dots$, which are embedded one into another in a natural way.
Let $U(\infty)$ be their union. Following the philosophy of
\cite{Olsh_Howe}, we form a $(G,K)$--pair, where $G$ is the group
$U(\infty)\times U(\infty)$ and $K$ is the diagonal subgroup,
isomorphic to $U(\infty)$. We are dealing with unitary
representations $T$ of the group $G$ possessing a distinguished
cyclic $K$--invariant vector $\xi$. Such representations are called
\emph{spherical representations} of the pair $(G,K)$. They are
completely determined by the corresponding matrix coefficients
$\psi(\cdot)=(T(\cdot)\xi,\xi)$. The $\psi$'s are called
\emph{spherical functions}. They are $K$--biinvariant functions on
$G$, which can be converted (via restriction to the subgroup
$U(\infty)\times\{e\}\subset G$) to central functions $\chi$ on
$U(\infty)$ (that are functions which are constant on conjugacy
classes). Irreducible representations $T$ are in one-to-one
correspondence to \emph{extreme characters} (i.e. extreme points in
the convex set of all characters, which are central positive
definite continuous functions on $U(\infty)$ taking on a value of
$1$ at the unity element of the group).

Irreducible spherical representations of $(G,K)$ and extreme
characters of $U(\infty)$ admit a complete description. They depend
on countably many continuous parameters. There is a bijective
correspondence $\chi^{(\omega)}\leftrightarrow \omega$ between
extreme characters and points $\omega$ of the infinite-dimensional
domain
$$
 \Omega\subset{\mathbb R}^{4\infty+2}={\mathbb R}^\infty\times {\mathbb R}^\infty
  \times {\mathbb R}^\infty \times {\mathbb R}^\infty \times {\mathbb R} \times {\mathbb R},
$$
where $\Omega$ is the set of sextuples

$$
\omega=(\alpha^+,\beta^+;\alpha^-,\beta^-;\delta^+,\delta^-)
$$
such that
$$
 \alpha^\pm=(\alpha_1^\pm\ge\alpha_2^\pm\ge\dots\ge 0)\in {\mathbb R}^\infty,
 \quad \beta^\pm=(\beta_1^\pm\ge\beta_2^\pm\ge\dots\ge 0)\in {\mathbb R}^\infty,
$$
$$
 \sum\limits_{i=1}^\infty(\alpha_i^\pm+\beta_i^\pm)\le\delta^\pm,\quad \beta_1^+ +\beta_1^-\le 1.
$$
D.~Voiculescu discovered an explicit formula for the functions
$\chi^{(\omega )}(U)$, where $U\in U(\infty)$. We do not need it
 in the present paper, this formula can be found
in \cite{Vo} or \cite{Olsh}.

It was proved in \cite{Olsh} that any character $\chi$ corresponds
to a unique probability measure $\sigma$ on $\Omega$, such that
$$
 \chi(U)=\int_\Omega \chi^{(\omega)}(U)\sigma(d\omega), \quad U\in U(\infty).
$$
We call $\sigma$ the \emph{spectral measure} of the character ${\chi}$. The
inverse statement is also true, every probability measure on
$\Omega$ corresponds to a certain character of the group
$U(\infty)$.

According to the general theory, the disjointness of two spherical
representations $T$ and $T'$ is equivalent to the disjointness of
the spectral measures $\sigma$ and $\sigma'$ corresponding to their
characters. Our aim is to prove the disjointness of these measures.

\emph{The Gelfand-Tsetlin graph} (also known as the graph of signatures) is
a convenient tool for describing characters of $U(\infty)$. The
vertices of the graph symbolize the irreducible representations of
groups $U(N)$ while the edges encode the inclusion relations between
irreducible representations of $U(N)$ and $U(N+1)$. The $N$--th
level of the graph, denoted by $\mathbb{ GT}_N$, corresponds to the
irreducible representations of $U(N)$. Elements of $\mathbb{GT}_N$
can be identified with dominant weights for $U(N)$, i.e., these are
$N$--tuples of integers $\lambda=(\lambda_1\ge\dots\ge\lambda_N)$
which are also called \emph{signatures}. We join two signatures $\lambda
\in \mathbb{GT}_N$ and $\mu\in\mathbb{GT}_{N+1}$ by an edge and
write $\lambda\prec\mu$ if
$$\mu_1\ge\lambda_1\ge\mu_2\ge\dots\ge\lambda_N\ge\mu_{N+1}.$$

In what follows it is convenient for us to represent signature
$\lambda$ by two Young diagrams $\lambda^+$ and $\lambda^-$: the row
lengths of $\lambda^+$ are the positive coordinates $\lambda_i$,
while the row lengths of $\lambda^-$ are the absolute values of the
negative coordinates. We call $\lambda^+$ and $\lambda^-$ \emph{a
``positive'' Young diagram} and a \emph{``negative'' Young diagram},
respectively.

If $\tilde \lambda$ and $\tilde \mu$ are two Young diagrams such
that $\tilde \lambda\subset\tilde \mu$ then their set-theoretical
difference is called \emph{skew diagram} and we denote it by $\tilde
\lambda / \tilde \mu$ (see \cite[Chapter 1]{Mac}). Recall that a
 horizontal strip is a skew Young diagram containing at most one box in each column.
Note that $\lambda\prec\mu$ if and only if both $\mu^+/\lambda^+ $
and $\mu^-/\lambda^-$ are \emph{horizontal strips}.

Recall that number $c(\square)=j-i$ is called the \emph{content} of
the box $\square$ lying  at the intersection of the $i$--th row and
$j$--th column.

We denote by $\chi^\lambda$ the irreducible character of $U(N)$,
indexed by the signature $\lambda\in\mathbb{GT}_N$. Given an
arbitrary character $\chi$ of the group $U(\infty)$ we may expand
the restriction of $\chi$  on
$U(N)$ into a convex combination of the functions
$\chi^\lambda(\cdot)/\chi^\lambda(e)$.
 The coefficients of this
expansion determine a probability distribution $M_N(\lambda)$ on the
discrete set $\mathbb{GT}_N$. In this way, we get a bijection
between characters and certain sequences of probability
distributions $\{M_N\}_{N=1,2,\dots}$, which are called
\emph{coherent systems} (because of certain coherency relations
connecting $M_N$ and $M_{N+1}$). Additional information can be found
in \cite{Olsh}.

\emph{A path} in the graph of signatures $\mathbb{GT}$ is a finite
or infinite sequence $t=(t(1),t(2),\dots)$, such that
$t(N)\in\mathbb{GT}_N$, and for every $N=1,2,\dots$, $t(N)$ and
$t(N+1)$ are joined by an edge. In what follows finite or infinite
paths are denoted by letters $t$, $\tau$. Usually
$\tau=(\tau(1),\tau(2),\dots)$ and $\tau_i(N)$ stands for the length
of the $i$-th row of the signature $\tau(N)$.

We say that a box $\square$ is added to the ``positive'' Young
diagram at level $N$ while moving along a path $\tau$ if
$\square\in\tau^+(N+1)/\tau^+(N)$.

 We can define a topology on the set
$\mathcal T$ of all infinite paths in $\mathbb{GT}$. A base of this topology
consists of \emph{cylindrical sets} $C_\tau$, where
$\tau=(\tau(1),\dots,\tau(N))$ is a finite path and
$$C_\tau=\{t\in {\mathcal T} | t(1)=\tau(1),\dots,t(N)=\tau(N)\}.$$

Now we may take $\sigma$--algebra of Borel sets generated by this
topology and consider measures on this $\sigma$--algebra.

A measure on $\mathcal T$ is called \emph{central} if the measure of a
cylindrical set $C_\tau$, $\tau=(\tau(1),\dots,\tau(N))$ depends
solely on $\tau(N)$.

Consider an arbitrary signature $\lambda\in\mathbb{GT}_N$. The set
of finite paths terminating at $\lambda$ consists of exactly
$\operatorname{Dim}_N(\lambda)=\chi^\lambda(e)$ elements. Given a
coherent system on the graph of signatures we may construct a
certain central measure $\rho$ on the set $\mathcal T$ by setting
$$\rho(C_\tau)=\frac{M_N(\tau(N))}{\operatorname{Dim}_N(\tau(N))}$$
where $C_\tau$ is an arbitrary cylindrical set as above and
$\tau(N)$ is the endpoint of $\tau$. Correctness of this definition
follows from the properties of coherent systems.

Thus, we obtain a sequence of bijections connecting characters of
the group $U(\infty)$, probability measures $\sigma$ on $\Omega$
(spectral measures),
 coherent systems $\{M_N\}$, and
central measures $\rho$ on $\mathcal T$.

Let us show that the disjointness of spectral measures $\sigma$ and
$\sigma'$ follows from the disjointness of the corresponding central
measures $\rho$ and $\rho'$ (the similar proposition in the case of
the infinite symmetric group was proved in \cite{KOV}).

\begin{proposition}
Assume that central measures $\rho$ and $\rho'$ on the graph of
signatures are disjoint, then the corresponding spectral measures $\sigma$ and $\sigma'$ on
$\Omega$ are also disjoint.
\end{proposition}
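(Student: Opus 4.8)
The plan is to exploit the measurable isomorphism that the general theory sets up between the path space $\mathcal T$ and a space naturally fibered over $\Omega$. Recall that there is a "boundary map" (or folding map) $\pi\colon \mathcal T_0 \to \Omega$, defined on a Borel subset $\mathcal T_0\subset\mathcal T$ of full measure for every central measure, which sends an infinite path $t$ to the point $\omega(t)\in\Omega$ describing its asymptotic behaviour (the scaled row lengths $\lambda_i^\pm(N)/N$ converge to $\alpha_i^\pm,\beta_i^\pm$, and the normalized areas to $\delta^\pm$). The key structural fact, due to Olshanski and used in \cite{KOV} in the symmetric-group case, is that under $\pi$ the central measure $\rho$ pushes forward precisely to the spectral measure $\sigma$, and moreover $\rho$ disintegrates over $\sigma$: for $\sigma$-almost every $\omega$ the conditional measure $\rho(\,\cdot \mid \pi^{-1}(\omega))$ is the canonical central measure $\rho^{(\omega)}$ attached to the extreme character $\chi^{(\omega)}$. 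These conditional measures are intrinsic — they depend only on $\omega$, not on which $\rho$ we started from.

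First I would make the above precise: $\rho = \int_\Omega \rho^{(\omega)}\,\sigma(d\omega)$ and likewise $\rho' = \int_\Omega \rho^{(\omega)}\,\sigma'(d\omega)$, with the \emph{same} family $\{\rho^{(\omega)}\}$ in both integrals. Second, I would argue by contraposition. Suppose $\sigma$ and $\sigma'$ are \emph{not} disjoint. Then there is a third probability measure $\sigma''$ on $\Omega$ with $\sigma'' \ll \sigma$ and $\sigma'' \ll \sigma'$ and $\sigma''\ne 0$ — i.e. $\sigma$ and $\sigma'$ have a common "component" in the sense that $\sigma|_A \ll \sigma'$ for some Borel set $A$ with $\sigma(A)>0$ (equivalently, the parts of $\sigma$ and $\sigma'$ that are mutually absolutely continuous do not both vanish). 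Concretely, by the Lebesgue decomposition there is a Borel set $A\subset\Omega$ with $\sigma(A)>0$ such that $\sigma|_A$ and $\sigma'|_A$ are mutually absolutely continuous.

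Third, I would push this common component up to the path space. Let $\rho|_A := \int_A \rho^{(\omega)}\,\sigma(d\omega)$ and $\rho'|_A := \int_A \rho^{(\omega)}\,\sigma'(d\omega)$; these are (up to normalization) subrepresentation-type pieces of $\rho$ and $\rho'$ in the sense that $\rho|_A \ll \rho$ and $\rho'|_A \ll \rho'$. Because both are built by integrating the \emph{same} kernel $\omega\mapsto\rho^{(\omega)}$ against two measures on $\Omega$ that are mutually absolutely continuous on $A$, with Radon--Nikodym derivative $g=\frac{d(\sigma'|_A)}{d(\sigma|_A)}$, one gets $\frac{d(\rho'|_A)}{d(\rho|_A)}(t) = g(\pi(t))$ for $\rho|_A$-a.e.\ $t$; in particular $\rho|_A$ and $\rho'|_A$ are mutually absolutely continuous and nonzero. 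Hence $\rho$ and $\rho'$ are not disjoint. This contradicts the hypothesis, so $\sigma$ and $\sigma'$ must be disjoint.

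The main obstacle is entirely in the first step: justifying the disintegration $\rho=\int_\Omega \rho^{(\omega)}\sigma(d\omega)$ with a common, $\rho$-independent conditional family, together with the identity $\pi_*\rho=\sigma$ on a full-measure Borel set. This requires the ergodic-method description of the boundary of the Gelfand--Tsetlin graph (Vershik--Kerov theory): the extreme central measures on $\mathcal T$ are exactly the $\rho^{(\omega)}$, the map $t\mapsto\omega(t)$ is Borel and defined $\rho$-a.e.\ for every central $\rho$, and the ergodic decomposition of a central measure is its image of the form $\int\rho^{(\omega)}\sigma(d\omega)$ with $\sigma$ the spectral measure — this is the content linking $\{M_N\}$, $\sigma$, and $\rho$ cited from \cite{Olsh}. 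Once this is in hand, everything else is the soft measure-theoretic manipulation above, exactly as in \cite{KOV}; I would simply cite \cite{Olsh,KOV} for the disintegration statement and then carry out the contrapositive argument.
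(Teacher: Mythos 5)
Your proposal is correct and rests on exactly the same key fact as the paper's proof, namely the representation $\rho=\int_\Omega\rho^{(\omega)}\,\sigma(d\omega)$ with a $\rho$-independent kernel, followed by a contrapositive argument from non-disjointness of $\sigma,\sigma'$. The paper extracts the common part more economically — it takes $\tilde\sigma=\sigma\wedge\sigma'$ and uses monotonicity of the integral representation to get $\tilde\rho\le\rho,\rho'$ — whereas you pass through a Lebesgue-decomposition set $A$ and the boundary map $\pi$, but this is only a technical variation of the same argument.
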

\begin{proof}
 Assume the contrary. Denote by $\tilde\sigma=\sigma\wedge\sigma'$
 the greatest lower bound of the measures $\sigma$ and $\sigma'$ (we say that a measure
 $\mu$ is less than a measure $\nu$ if
 $\mu (A) \le \nu (A)$ for an arbitrary measurable set $A$). The existence of such measure can be easily verified.
 It is evident that if $\sigma$ and $\sigma'$ are not disjoint, then $\tilde\sigma$ is a non-zero measure.

 Next, observe that the correspondence $\sigma\leftrightarrow\rho$ between probability spectral measures
 on $\Omega$ and central measures on $\mathcal T$ can be extended to arbitrary finite measures, which are not necessarily
  probability measures.

 Denote by $\tilde \rho$ the central measure corresponding to $\tilde\sigma$.

 There is an integral representation for the value of this measure on a cylindrical
 set. Let $\omega\in\Omega$. Let $\rho_\omega$ be the central measure on $\mathcal T$ corresponding to the extreme
 character $\chi^{(\omega )}(U)$.
 Denote by $f_{\tau(N)}(\omega)$ the value of $\rho_\omega$ on the
cylindrical set $C_\tau$, where $\tau(N)$ is the endpoint of $\tau$.
We have
 $$
  \tilde\rho(C_\tau)=\int_\Omega
f_{\tau(N)}(\omega)\tilde\sigma(d\omega).
 $$

 From the last formula, nonnegativity of
 $f_\lambda(\omega)$, and inequalities
 $\tilde\sigma\le\sigma$, $\tilde\sigma\le\sigma'$ it follows that
 $\tilde\rho\le\rho$ and $\tilde\rho\le\rho'$. Consequently,
 $\rho\wedge\rho'\neq 0$ and the measures $\rho$ and $\rho'$ are not disjoint.

\end{proof}

In what follows we are going to prove disjointness of the central
measures on the graph of signatures corresponding to the characters
of the representations $T_{z,w}$ and $T_{z',w'}$. Let us denote
these measures by $\rho_{z,w}$ and $\rho_{z',w'}$.

It was shown in \cite{Olsh} that the coherent system corresponding
to the representation $T_{z,w}$ is given by
$$
M_N^{z,w}(\lambda)=\left( S_N(z,w) \right) ^{-1}\cdot \prod\limits_{i=1}^N
\left|\frac{1}{\Gamma(z-\lambda_i+i)\Gamma(w+N+1+\lambda_i-i)}\right|^2\cdot
\operatorname{Dim}^2_N(\lambda),
$$
where $ S_N(z,w) $ is the normalization constant whose explicit
value is not important for us. Denote by $P_N^{z,w}(\lambda)$ the
measure $\rho_{z,w}$ of an arbitrary cylindrical set $C_\tau$, such
that $\lambda$ is the endpoint of the path $\tau$.
 The following
formula holds:
$$\rho_{z,w}(C_\tau)=P_N^{z,w}(\tau(N))=M_N^{z,w}(\tau(N))/\operatorname{Dim}_N(\tau(N)).$$

The measure $\rho_{z,w}$ is called the $(z,w)$--measure below.

From this point on we forget about representations $T_{z,w}$, all
further arguments are based on the last two formulas.

\section{Scheme of proof}

Our aim is to prove that the $(z,w)$--measures are pairwise disjoint.

Consider two pairs $(z,w)$ and $(z',w')$, assume that all four
numbers are non-integral, $z\neq z'$, $z\neq \overline{z'}$, $w\neq
w'$, $w\neq \overline{w'}$. Let us study the ratio
$\frac{P_N^{z,w}}{P_N^{z',w'}}$. Let ${\mathfrak Q}$ be the set of
paths along which this ratio tends to a finite non-zero limit.
Clearly, this is a Borel set. The disjointness of the measures
follows from the equalities  $\rho_{z,w}({\mathfrak
Q})=\rho_{z',w'}({\mathfrak Q})=0$ and the following proposition

\begin{prop}[ {\cite[Chapter 7, Section 6, Theorem 2]{Shir}} ]
Let $X$ be a measurable space with a filtration $\{A_n\}$. Denote by
$P_n$ and $\tilde P_n$ restrictions of two arbitrary probability
measures $P$ and $\tilde P$ on the $\sigma$--algebra $A_n$. Suppose
that $\tilde P_n$ is absolutely continuous with respect to the
measure $P_n$ and Radon-Nickodim derivative $\frac{d\tilde P_n}{d
P_n}$ tends to zero (as $n$ tends to $\infty$) almost everywhere
with respect to $P$. Then $P$ and $\tilde P$ are disjoint.
\end{prop}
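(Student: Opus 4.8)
This is a classical fact, and the argument I would give is the standard martingale (Kakutani-type) dichotomy. Write $\xi_n=\dfrac{d\tilde P_n}{dP_n}$, a nonnegative $A_n$-measurable function with $\int_X\xi_n\,dP=\tilde P_n(X)=1$. First I would check that $(\xi_n,A_n)$ is a martingale under $P$: for $A\in A_n\subset A_{n+1}$,
\begin{align*}
\int_A\xi_{n+1}\,dP &=\int_A\xi_{n+1}\,dP_{n+1}=\tilde P_{n+1}(A)=\tilde P(A)\\
&=\tilde P_n(A)=\int_A\xi_n\,dP_n=\int_A\xi_n\,dP,
\end{align*}
since $\tilde P_n$ and $\tilde P_{n+1}$ are restrictions of the same measure $\tilde P$ and the absolute-continuity hypothesis justifies each step. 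Hence $E_P[\xi_{n+1}\mid A_n]=\xi_n$; a nonnegative martingale converges $P$-almost everywhere, so $\xi_n\to\xi_\infty\in[0,\infty)$ $P$-a.e., and the hypothesis $\xi_n\to 0$ $P$-a.e. forces $\xi_\infty=0$ $P$-a.e. The real work is to deduce singularity from the vanishing of this limit.

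The cleanest route passes to the symmetric reference measure $\mu=\tfrac12(P+\tilde P)$, so that $P\ll\mu$ and $\tilde P\ll\mu$, and sets $p_n=\dfrac{dP_n}{d\mu_n}$, $q_n=\dfrac{d\tilde P_n}{d\mu_n}$. These take values in $[0,2]$ and satisfy $p_n+q_n=2$ $\mu$-a.e.; the same computation as above shows each is a bounded $\{A_n\}$-martingale under $\mu$, hence $p_n\to p_\infty$ and $q_n\to q_\infty$ both $\mu$-a.e. and in $L^1(\mu)$, with $p_\infty+q_\infty=2$. Passing to the limit in the identity $\int_A p_n\,d\mu=P(A)$ (valid for every $A$ in the generating algebra $\bigcup_n A_n$, once $n$ is large) identifies $p_\infty$ with $\dfrac{dP}{d\mu}$, and likewise $q_\infty$ with $\dfrac{d\tilde P}{d\mu}$, on $A_\infty:=\sigma\big(\bigcup_n A_n\big)$.

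Now put $B=\{p_\infty>0\}\in A_\infty$. Then $P(B^c)=\int_{B^c}p_\infty\,d\mu=0$, so $P$ is concentrated on $B$, and on $B$ the measures $P$ and $\mu$ are mutually absolutely continuous. On $B$ one also has $\xi_n=q_n/p_n$ for all large $n$ ($\mu$-a.e., since $p_n\to p_\infty>0$), hence $\xi_n\to q_\infty/p_\infty$ $\mu$-a.e. on $B$. Since $\xi_n\to 0$ $P$-a.e. and $P$ and $\mu$ are equivalent on $B$, the limit must vanish: $q_\infty/p_\infty=0$, i.e.\ $q_\infty=0$, $\mu$-a.e. on $B$. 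Therefore $\tilde P(B)=\int_B q_\infty\,d\mu=0$, and together with $P(B^c)=0$ this exhibits $B$ as a set separating $P$ and $\tilde P$; hence $P\perp\tilde P$.

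I expect the genuinely delicate point to be precisely this identification of the almost-everywhere limit of the Radon--Nikodym derivatives with the Lebesgue decomposition of $\tilde P$ relative to $P$: a priori $(\xi_n)$ is only a nonnegative, not uniformly integrable, martingale, and ``mass'' can leak into a component of $\tilde P$ that is singular with respect to $P$ --- which is exactly the phenomenon that forces $\xi_n\to 0$. Replacing $\xi_n$ by the bounded martingales $p_n,q_n$ built from the symmetric reference measure $\mu$ is what keeps the bookkeeping transparent; the only care needed is to track the exceptional null sets consistently across the three measures $P$, $\tilde P$, $\mu$, and to assert singularity in the correct $\sigma$-algebra $A_\infty$.
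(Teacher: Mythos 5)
Your proof is correct. Note, however, that the paper does not prove this proposition at all: it is quoted verbatim as a known result, with a citation to Shiryaev's textbook (Chapter~7, Section~6, Theorem~2), so there is no in-paper argument to compare against. What you have written is essentially the standard proof from that reference: pass to the symmetric dominating measure $\mu=\tfrac12(P+\tilde P)$, use the bounded (hence uniformly integrable) martingales $p_n,q_n$ to identify $p_\infty=dP/d\mu$ and $q_\infty=d\tilde P/d\mu$ on $A_\infty$, and read off singularity from the set $B=\{p_\infty>0\}$. All the delicate points are handled: the chain rule $q_n=\xi_n p_n$ justifies $\xi_n\to q_\infty/p_\infty$ on $B$, and since the exceptional set for $\xi_n\to 0$ is $A_\infty$-measurable and $\mu\ll P$ on $B$, the transfer of the null set from $P$ to $\mu$ is legitimate. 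Nothing further is needed.
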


In order to show that the measure of the set ${\mathfrak Q}$ is
equal to zero, we introduce an auxiliary set $G$ (it will be defined
later). Denote temporarily ${\mathfrak Q_0}=G\cap {\mathfrak Q} $.
We prove below that the measure of ${\mathfrak Q_0}$ is equal to
zero (Sections 3 and 4) and then we show that the $(z,w)$--measure
of the set $G$ is equal to $1$ (Section 5). Clearly, these two facts
imply vanishing of the $(z,w)$--measure of the set ${\mathfrak Q}$.

Proof of the first part (the estimate for the measure of ${\mathfrak
Q_0}$) contains several technical details and we want to provide
some main ideas first. It is possible to pass from ${\mathfrak Q_0}$
to a family of sets ${\mathfrak Q_0^l}$. For every ${\mathfrak
Q_0^l}$ we construct a collection of maps $\{f_i\}$, which take
${\mathfrak Q_0^l}$ into a collection $\{{\mathfrak Q_i^l}\}$ of
pairwise disjoint sets. The measure of each ${\mathfrak Q_i^l}$ is
equal to the measure of ${\mathfrak Q_0^l}$. Since there exist
infinitely many such sets ${\mathfrak Q_i^k}$ we conclude that the
measure of each one is equal to zero. Hence, the measure of
${\mathfrak Q_0}$ is also equal to zero.

The essence of the maps $f_i$ is a small reorganization of paths. It
turns out, that it is enough to change only one level of a path to
generate a rather big fluctuation from the hypothetical limit value of
the ratio $\frac{P_N^{z,w}}{P_N^{z',w'}}$. This observation
is the main idea underlying the proof of the theorem.

Each map $f_i$ is very simple. We fix in advance some large integer
$k$. Each $f_i$ modifies every path as follows: Instead of adding a
box with content $k$ to the ``positive'' Young diagram at level $N$,
we do that at level $N+1$. Although, not every path can be modified
in this way, we will show later that for almost all paths from the
set ${\mathfrak Q_0^l}$ maps $f_i$ can be correctly defined.

Important moment in the argument is the following: the probability
of adding a horizontal strip containing $m$ boxes with the contents
$k,k+1,\dots ,k+m-1$ at level $N$ does not exceed $c(k,m)/N^m$ (we
use this statement only in the case $m=1$, but it also holds in the
more general case).

\section{Modification of paths}

We proceed to the detailed proof.

Let $\tau=(\tau(1),\tau(2),\dots)$ be a path in the graph of
signatures. $\tau^+(N)$ is an increasing sequence of Young diagrams.
Denote $\overline\tau^+=\bigcup\limits_{N}\tau^+(N)$. It is clear
that $\overline\tau^+$ is either the whole quadrant or only some its
part called \emph{a thick hook}. Denote by $G^+$ the set of all
paths $ \tau$ such that $\overline\tau^+$ is the whole quadrant. In
a similar way we define
$\overline\tau^-=\bigcup\limits_{N}\tau^-(N)$ and $G^-$ is the set
of all paths $\tau$ such that $\overline\tau^-$ is the whole
quadrant. In Section 5 we show that $\rho_{z,w}(G^+\cup G^-)=1$. Now
our aim is to prove the following theorem:

\begin{theorem}
 If $\{z, \overline{z}\}\neq\{z', \overline{z'}\} $, then
 $$\rho_{z,w}({\mathfrak Q}\cap G^+)=\rho_{z',w'}({\mathfrak Q}\cap G^+)=0.
 $$
 If $\{w, \overline{w}\}\neq\{w', \overline{w'}\} $, then
 $$\rho_{z,w}({\mathfrak Q}\cap G^-)=\rho_{z',w'}({\mathfrak Q}\cap G^-)=0.
 $$
\end{theorem}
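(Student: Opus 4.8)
\noindent\emph{Proof strategy.} By symmetry it suffices to prove $\rho_{z,w}({\mathfrak Q}\cap G^+)=0$: interchanging the two pairs of parameters changes neither ${\mathfrak Q}$ nor $G^+$ and yields the statement for $\rho_{z',w'}$, while the two assertions about $G^-$ follow by the identical argument carried out on the negative Young diagrams with $w,w'$ in the role of $z,z'$. Write $R_N(\tau):=P_N^{z,w}(\tau(N))/P_N^{z',w'}(\tau(N))$; on ${\mathfrak Q}$ this tends to $R_\infty(\tau)\in(0,\infty)$. Since $z,z'$ are non-integral and $\{z,\overline z\}\neq\{z',\overline{z'}\}$, the map $k\mapsto |z-k|^2-|z'-k|^2$ is a non-constant affine function of the integer $k$ (a non-zero constant if $\Re z=\Re z'$), so $q:=|z'-k|^2/|z-k|^2\neq1$ for all large $k$; fix one such $k$, put $\delta:=|q-1|/2$, fix $\epsilon>0$ with $\epsilon\le\delta/(6|q|)$, and decompose ${\mathfrak Q}\cap G^+=\bigcup_{l\ge1}{\mathfrak Q}_0^l$ with ${\mathfrak Q}_0^l:=\{\tau\in{\mathfrak Q}\cap G^+:\ |R_M(\tau)/R_\infty(\tau)-1|<\epsilon\ \ \forall M\ge l\}$. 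It is enough to prove $\rho_{z,w}({\mathfrak Q}_0^l)=0$ for every $l$.

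\noindent The tool is a one-level surgery $f_N$. Let $D_N$ be the clopen set of paths $\tau$ for which the box of content $k$ is a removable corner of $\tau^+(N)$, in some row $r$, and the signature obtained from $\tau(N)$ by lowering its $r$-th coordinate by $1$ still interlaces $\tau(N-1)$ and $\tau(N+1)$; on $D_N$ let $f_N$ carry out precisely this replacement at level $N$ and leave all other levels fixed (``delay the content-$k$ box from level $N$ to level $N+1$''). Three routine facts: \textbf{(a)} $f_N$ is injective on $D_N$ and maps each cylinder of length $>N$ bijectively onto a cylinder of equal $\rho_{z,w}$-measure (only an intermediate signature changes, and $P_M^{z,w}$ depends only on the terminal one), so $f_N$ preserves $\rho_{z,w}$; \textbf{(b)} $f_N$ preserves $G^+$ and $R_M(f_N\tau)=R_M(\tau)$ for all $M\neq N$, hence it maps ${\mathfrak Q}\cap G^+\cap D_N$ into ${\mathfrak Q}\cap G^+$ and keeps $R_\infty$ fixed; \textbf{(c)} inserting the explicit formula for $P_N^{z,w}$ and using $\Gamma(x+1)=x\Gamma(x)$, the Weyl dimensions cancel and
$$R_N(f_N\tau)=q_N\,R_N(\tau),\qquad q_N=\frac{|w+N+k|^2}{|w'+N+k|^2}\cdot\frac{|z'-k|^2}{|z-k|^2}\ \longrightarrow\ q\qquad(N\to\infty).$$
So the surgery does not move $R_\infty$ but plants a single ``blip'' of relative size $\to|q-1|$ at level $N$.

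\noindent Fix $l$ and suppose $\rho_{z,w}({\mathfrak Q}_0^l)>0$. Choose $N^*(l)\ge l$ so that $|q_N/q-1|\le\epsilon$ for $N\ge N^*(l)$; then for $\tau\in{\mathfrak Q}_0^l\cap D_N$ with $N\ge N^*(l)$ one has $|R_N(f_N\tau)/(qR_\infty)-1|<3\epsilon$, hence $|R_N(f_N\tau)/R_\infty-1|\ge\delta$, whereas $|R_M(f_N\tau)/R_\infty-1|<\epsilon<\delta$ for all other $M\ge l$. Thus $N$ is the only level $\ge l$ at which $f_N\tau$ deviates from its limit by at least $\delta$, so the sets $f_N({\mathfrak Q}_0^l\cap D_N)$, $N\ge N^*(l)$, are pairwise disjoint, each of $\rho_{z,w}$-measure $\rho_{z,w}({\mathfrak Q}_0^l\cap D_N)$ by \textbf{(a)}. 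Assuming (see below) that $\rho_{z,w}$-a.e. path of $G^+$ lies in $D_N$ for infinitely many $N$, we get $\mathbf{1}_{{\mathfrak Q}_0^l}\sum_N\mathbf{1}_{D_N}=\infty$ on a set of positive measure, so $\sum_N\rho_{z,w}({\mathfrak Q}_0^l\cap D_N)=\infty$; but these disjoint images all live inside a probability space --- contradiction. Hence $\rho_{z,w}({\mathfrak Q}_0^l)=0$ for all $l$, which proves the theorem.

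\noindent The real obstacle is the claim just used: $\rho_{z,w}$-a.e. $\tau\in G^+$ belongs to $D_N$ for infinitely many $N$. Membership can fail in two ways: the row first reaching the content-$k$ diagonal may overshoot it, or, after landing exactly on content $k$, the delay may violate the interlacing one level up (equivalently, two consecutive boxes of a fixed column get added at two consecutive levels). The first failure forces a horizontal strip carrying boxes of contents $k$ and $k+1$ to appear at that level, and by the estimate of Section~2 (the bound $c(k,2)/N^2$) together with Borel--Cantelli this happens at only finitely many levels almost surely; the second failure has to be ruled out by a finer combinatorial and probabilistic analysis of the interlacing pattern --- using the probability estimates, and perhaps the freedom to vary $k$ over its infinitely many admissible values. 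I expect this last verification, not the surgery bookkeeping, to be where the difficulty lies.
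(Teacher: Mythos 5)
Your surgery $f_N$, the measure-preservation/injectivity bookkeeping, and the ``unique blip at a unique level forces disjoint images'' argument reproduce the paper's strategy faithfully (the paper phrases the counting via uniformly distributed $[N_p,N_q]$--fragments with fixed endpoints rather than via $\sum_N\rho_{z,w}(\mathfrak{Q}_0^l\cap D_N)=\infty$, but these are equivalent uses of centrality, and your version is arguably cleaner). You have also correctly identified the two admissibility conditions (no content-$(k{+}1)$ box at the same step, no content-$(k{-}1)$ box at the next step), which are exactly conditions 3 and 4 in the paper's definition of the set $B(k,\{N_i\})$.

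However, there is a genuine gap, and you have flagged it yourself: the claim that $\rho_{z,w}$-a.e.\ path of $G^+$ lies in $D_N$ for infinitely many $N$ is not proved. Everything reduces to one estimate, which is the paper's Lemma 8: uniformly over the conditioning on the entire past of the chain, the conditional probability of adding a box of content $k$ at level $N$ is at most $c(k)/N$ (valid once $\Re(k+w)>0$). This is not a ``routine'' fact; the paper proves it by writing out the transition probabilities $P^{z,w}_{N+1}/P^{z,w}_N$ explicitly, applying Weyl's dimension formula, and summing the resulting ratios $p_m$ via Gauss's evaluation of ${}_2F_1(a,b;c;1)$ — about a page of computation, and the only place the analytic hypotheses on $w$ enter. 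You invoke the ``estimate of Section 2'' (the $c(k,m)/N^m$ bound), but in the paper that sentence is only an announcement of what Lemma 8 later establishes, so citing it is circular in a blind proof. I would also correct your diagnosis of the difficulty: the ``second failure mode'' (content $k$ added at level $l$, content $k{-}1$ added at level $l{+}1$) does not require any finer analysis of the interlacing pattern than the first one. Both failures are ruled out by the same Lemma-8 bound applied twice in succession — once at level $l$ and once conditionally at the same or the next level — giving a probability $O(1/l^2)$ per level, after which Borel--Cantelli (or the paper's windowed version, Lemma 9 and Proposition 5, which avoids needing the $m=2$ bound by looking only at the first content-$k$ addition in each geometrically growing window $[N_i,N_{i+1})$) finishes the job. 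So the missing piece is not extra combinatorics but the single quantitative transition-probability estimate.
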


In particular this theorem implies that if $z\neq z'$, $z\neq
\overline{z'}$, $w\neq w'$ and $w\neq \overline{w'}$, then the
corresponding measures are disjoint.

Note that both Gelfand-Tsetlin graph and $z,w$--measures are
invariant under the symmetry
$$
 z\leftrightarrow w,\quad
(\lambda_1,\dots,\lambda_N)\leftrightarrow(-\lambda_N
,\dots,-\lambda_1).$$ Thus, it is enough to prove only the first
part of the theorem.

Let us denote by $h_N(\tau)$ the double ratio
$$h_N(\tau)=\frac{P_{N+1}^{z,w}(\tau(N+1))}{P_{N+1}^{z',w'}(\tau(N+1))} :
\frac{P_N^{z,w}(\tau(N))}{P_N^{z',w'}(\tau(N))}.$$

Note that if $\frac{P_N^{z,w}(\tau(N))}{P_N^{z',w'}(\tau(N))}$ tends
to a finite nonzero limit, then $h_N(\tau)\to 1$.

Denote by ${\mathfrak A}$ the set of paths along which  $h_N\to 1$.
Our aim is to prove that the $(z,w)$--measure of the set ${\mathfrak
A}\cap G^+$ is equal to zero ($\rho_{z,w}({\mathfrak A}\cap
G^+)=0$).

Fix $\delta >0$. We say that there is a $\delta$--fluctuation, or
simply a fluctuation, at the level $k$ provided that
${|h_k-1|>\delta}$. Denote by $A_t^\delta$ the set of those paths
that contain no $\delta$--fluctuations starting from level $t$. We
will show that if we choose sufficiently small $\delta$, then
$\rho_{z,w}(A_t^\delta \cap G^+)=0$ for every $t$. It is clear that
this statement implies our theorem.

\begin{theorem} There exists $\delta>0$ such that for every $t$
 $$\rho_{z,w}(A_t^\delta \cap G^+)=0.$$
\end{theorem}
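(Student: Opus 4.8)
The plan is to fix a large integer $k$ and use it to build, out of a single modification of paths, infinitely many pairwise disjoint copies of (a subset of) $A_t^\delta\cap G^+$. Concretely, I would first reduce $A_t^\delta\cap G^+$ to a countable union of sets $\mathfrak Q_0^l$ on which the behaviour of paths near content $k$ is controlled: for a path in $G^+$ the box with content $k$ is eventually added to $\tau^+$, so for each $l$ let $\mathfrak Q_0^l$ be the paths in $A_t^\delta\cap G^+$ for which this box is added at some level $\ge l$ and for which the positive diagram already has a prescribed (finite) shape below row/column position determined by $k$ at level $l$. Since $A_t^\delta\cap G^+=\bigcup_l \mathfrak Q_0^l$ up to a null set, it suffices to show $\rho_{z,w}(\mathfrak Q_0^l)=0$ for each $l$.

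Next, for a path $\tau\in\mathfrak Q_0^l$ that adds the content-$k$ box to $\tau^+$ precisely at level $N=N(\tau)$, define maps $f_i$ (for $i=l,l+1,\dots$) that do nothing unless $N(\tau)=i$, in which case $f_i$ delays that single box: it removes the content-$k$ box from $\tau^+(i+1)$ and re-inserts it at level $i+2$ instead, leaving all other boxes of all other diagrams untouched. One must check this produces a genuine path (the interlacing $\prec$ is preserved because we are only moving one box of content $k$ by one level, and the hypotheses defining $\mathfrak Q_0^l$ guarantee room to do so) and that the $f_i$ are injective with pairwise disjoint images $\mathfrak Q_i^l$ — disjoint because $f_i(\tau)$ is the unique path whose content-$k$ box enters $\tau^+$ at level $i+1$ while agreeing with something in $\mathfrak Q_0^l$ elsewhere. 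The key quantitative input is the bound from Section 2: the $\rho_{z,w}$-probability of adding a given single box of content $k$ at level $N$ is $O(1/N)$; combined with the explicit formula for $P_N^{z,w}$ this lets me compute the Radon–Nikodym-type factor $\rho_{z,w}(f_i(\tau))/\rho_{z,w}(\tau)$ along the modified paths and conclude $\rho_{z,w}(\mathfrak Q_i^l)=\rho_{z,w}(\mathfrak Q_0^l)$, so that $\sum_i \rho_{z,w}(\mathfrak Q_i^l)\le 1$ forces $\rho_{z,w}(\mathfrak Q_0^l)=0$.

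The heart of the matter, and where the hypothesis $\{z,\bar z\}\neq\{z',\bar z'\}$ and the choice of $k$ enter, is showing that on $\mathfrak Q_0^l$ the map $f_i$ actually destroys the no-fluctuation condition: I need that moving the content-$k$ box from level $i+1$ to level $i+2$ changes the double ratio $h_{i+1}$ (or a nearby $h$) by more than $\delta$. This is because the ratio $P_{i+1}^{z,w}/P_{i+1}^{z',w'}$ contains, via the product formula, Gamma-function factors $|\Gamma(z-\lambda_j+j)|^{-2}$ whose dependence on whether the content-$k$ box is present at level $i+1$ is governed by a factor of the form $|z-k|^{\pm 2}$ versus $|z'-k|^{\pm 2}$ (together with the analogous $w$-factors, which are made negligible by taking $k$ large and positive so that $w+N+1+\lambda_i-i$ stays far from zero). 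Since $|z-k|^2=|z'-k|^2$ for all large $k$ would force $\{z,\bar z\}=\{z',\bar z'\}$, for suitable large $k$ the quantity $\bigl||z-k|^2/|z'-k|^2-1\bigr|$ is bounded below by some $\delta_0>0$; choosing $\delta<\delta_0$ (uniformly, as $\delta$ may be taken smaller without harm) guarantees that $f_i(\tau)\notin A_t^\delta$. Thus $f_i(\tau)\notin\mathfrak Q_0^l$, which is exactly the input making the images $\mathfrak Q_i^l$ genuinely new and the counting argument go through. I expect the main obstacle to be the bookkeeping that ensures $f_i$ is well-defined \emph{for almost every} path in $\mathfrak Q_0^l$ — i.e., that the local configuration of the diagram around content $k$ generically leaves room to push the box down one level without violating interlacing — and keeping the Gamma-factor estimate uniform in the (unbounded) rows $\lambda_j$ that are far from content $k$.
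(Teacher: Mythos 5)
Your core mechanism is the right one and matches the paper's: delaying a single content-$k$ box by one level multiplies the ratio $P_{N}^{z,w}/P_{N}^{z',w'}$ by $\left|\frac{z'-k}{z-k}\right|^2\cdot\left|\frac{w+N+1+k}{w'+N+1+k}\right|^2$, and for a suitable large $k$ the first factor is bounded away from $1$ precisely because $\{z,\bar z\}\neq\{z',\overline{z'}\}$ (the second factor tends to $1$), so the modified path acquires a $\delta$-fluctuation and leaves $A_t^\delta$. The genuine gap is in how you produce \emph{infinitely many} disjoint copies. You speak of ``the content-$k$ box'' as though a path in $G^+$ adds such a box once, at a single level $N(\tau)$, and each $f_i$ acts nontrivially only on the slice $\{N(\tau)=i\}$. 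Under that reading, $\rho_{z,w}(\mathfrak Q_i^l)$ equals $\rho_{z,w}(\{\tau\in\mathfrak Q_0^l:N(\tau)=i\})$, not $\rho_{z,w}(\mathfrak Q_0^l)$; summing over $i$ manufactures only one extra copy's worth of measure in total, and $\sum_i\rho_{z,w}(\mathfrak Q_i^l)\le 1$ gives $\rho_{z,w}(\mathfrak Q_0^l)\le 1/2$, not $0$. (If instead you let $f_i$ be the identity off $\{N(\tau)=i\}$ so that each image has full measure, the images are manifestly not pairwise disjoint.) Note also that the equality of measures you do get comes from centrality --- the modification fixes the endpoint of the cylinder, hence its $\rho_{z,w}$-weight --- not from the $O(1/N)$ bound; that bound is needed elsewhere.

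The missing idea is that for $\tau\in G^+$ there is a box of content $k$ in every row, so content-$k$ boxes are added at infinitely many levels along one and the same path, and each such addition can be delayed independently. This is what the paper exploits: using the estimate that a content-$k$ addition at level $N$ has probability $O(1/N)$ (Lemma 8 --- which also guarantees, generically, that the relevant strip contains no content-$(k+1)$ box and the next strip no content-$(k-1)$ box, i.e.\ the well-definedness you correctly flag as delicate), one chooses windows $[N_m,N_{m+1})$ so that, off a set of measure $\varepsilon$, every window contains a movable content-$k$ addition. A single $[N_p,N_q]$-fragment of a path in $H=A_t^\delta\cap B(k,\{N_i\})$ then admits $q-p$ distinct modifications $f_p,\dots,f_{q-1}$, one per window, pairwise distinct because the fluctuation is created at distinct levels, and all sharing the same endpoints; since by centrality all fragments between fixed endpoints are equiprobable, the fraction belonging to $H$ is at most $\frac{1}{1+(q-p)}\to 0$. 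Without this ``one modification per window, infinitely many windows per path'' structure, your counting does not close.
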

\begin{proof}

Recall once again that the content of the box lying at the
intersection of the $i$-th row and the $j$-th column of a Young
diagram is the integer $c(\square)=j-i$.

It is evident that for any path from $G^+$ and for every $k$ a box
with the content $k$ is added to the ``positive'' Young infinitely
many times.

The idea of the subsequent argument is the following: adding a box
with a relatively small positive content may cause a
$\delta$--fluctuation. Suppose $\overline\tau$ is a finite fragment
of a path, there are no $\delta$--fluctuations in $\overline\tau$,
and a box with the content $k$ is added while moving along
$\overline\tau$. Then for almost any $\overline\tau$ there exists
another fragment $\overline\tau'$, such that $\overline\tau$ and
$\overline\tau'$ have the same starting point and endpoint, and
there is a $\delta$--fluctuation in $\overline\tau'$. Since
infinitely many boxes with the content $k$ are being added along the
typical path, the last claim also implies a stronger assertion. If a
fragment $\overline\tau$ is long enough, than we may construct
arbitrary many such fragments $\overline\tau'$. In turn, this
implies that measure of the set of paths without
$\delta$--fluctuations is equal to zero.

So fix a positive integer $k$, its value will be specified later. We
introduce a family of sets of paths $B(k,\{N_i\})$. In what follows
we construct certain maps $f_i$ on these sets which ``create
$\delta$--fluctuations''.

Let us fix some integer $k$ and a sequence of positive integers
$N_1<N_2<\dots$. Denote by $B(k,\{N_i\})$ a set of all paths
$\tau\in G^+$ such that for every $m>0$ there exist a positive
integer $n(m)$ and a box $\square(m)$ with coordinates $(i,j)$ which
meet the following conditions:
 \begin{enumerate}
  \item $n(m)$ is a minimal number $n\in [N_m,N_{m+1})$ such that
$\tau^+(n+1))/\tau^+(n)$ contains a box with content $k$
  \item $\square(m)\in \tau^+(n(m)+1))/\tau^+(n(m))$ and
$c(\square(m))=j-i=k$

  \item $\tau^+(n(m)+1))/\tau^+(n(m))$ does not contain a box with content $k+1$ (i.e. it does not contain a box with coordinates $(i,j+1)$ )
  \item $\tau^+(n(m)+2))/\tau^+(n(m)+1)$ does not contain a box with content $k-1$ (consequently, it does not contain
  a box with coordinates $(i+1,j)$)
 \end{enumerate}

\begin{proposition}[about admissible paths]
 Assume that $k$ is so large that $\Re (k+w)>0$ then for an arbitrary $\varepsilon > 0$ there exists a sequence of integers
 $N_1<N_2<\dots$ such that
  $\rho_{z,w}(G^+ \setminus B(k,\{N_i\})<\varepsilon$.
\end{proposition}

We give the proof of this proposition in the next section. Now we
are going to use it to prove Theorem 4.

Denote by $H$ the intersection of the sets $B(k,\{N_i\})$ and
$A_t^\delta$. Clearly, it is sufficient to prove that for small
enough $\delta$ the measure of $H$ is equal to zero.

A path from the set $H$ can be informally described in the following
way. Starting from some level it has no $\delta$--fluctuations but
boxes with content $k$, which were defined in the proposition, are
still added to the ``positive'' Young diagram.

Now fix $H$ and the sequence of integers $\{N_i\}$ corresponding to
it. Choose some large enough number $N_p$ from the $N_i$'s. Choose
an integer $q>p$ and consider the levels from $N_p$ to $N_q$ of the
graph of signatures. We call a sequence
$(\tau(N_p),\tau(N_p+1),\dots,\tau(N_q))$ a
\emph{$[N_p,N_q]$-fragment} of path $\tau=(\tau(1),\tau(2),\dots)$.

Fix an arbitrary signature $\lambda^p$ at level $N_p$ and an
arbitrary signature $\lambda^q$ at level $N_q$. Consider all
$[N_p,N_q]$--fragments of paths $\tau$, such that
$\tau(N_p)=\lambda^p$ and $\tau(N_q)=\lambda^q$. There is a finite
number of such $[N_p,N_q]$--fragments. The central property of the
$(z,w)$--measures implies that these fragments have uniform
distribution. We want to study the fraction of the
$[N_p,N_q]$--fragments of paths from the set $H$ in all such
$[N_p,N_q]$--fragments. We will prove that this fraction tends to
zero when $q\to \infty$ (uniformly in $\lambda^p$ and $\lambda^q$).
Obviously, this implies that the measure of the set $H$ is equal to
zero.

Let us also choose signatures at levels $N_{p+1},N_{p+2} \dots
N_{q-1}$. Suppose that
$\tau(N_{p})=\lambda^{p},\tau(N_{p+1})=\lambda^{p+1},\dots,\tau(N_{q})=\lambda^q$
and examine $[N_p,N_q]$--fragments of such paths $\tau$. We again
want to study the fraction of such $[N_p,N_q]$--fragments, obtained
from the paths of $H$, in all such $[N_p,N_q]$--fragments. It
sufficient to prove that this fraction tends to zero, provided that
$q$ tends to infinity.

Now fix a $[N_p,N_q]$--fragment, satisfying the above conditions.
Consider a $[N_s,N_{s+1}]$--fragment, that is a subfragment of the
above $[N_p,N_q]$--fragment, and define a path modification $f_s$ in
the following way.

We know that a box with content $k$ is added at a level $n(s)\in
[N_s, N_s+1, \dots , N_{s+1})$. Modification $f_s$ consists in
adding this box not at level $n(s)$, but at level $n(s)+1$ (i.e. we
remove this box from the skew diagram $\tau^+(n(s)+1)/\tau^+(n(s))$
and add the very same box to the skew diagram
$\tau^+(n(s)+2)/\tau^+(n(s)+1)$) Conditions $3$ and $4$ on the set
$B(k,\{N_i\})$ imply that this modification is possible. Thus, we
obtained some map, its domain of definition is the set $H$. It is
clear that this map is injective. (To determine the preimage of a
path one should choose the first addition of a box with content $k$
and simply move this addition to the previous level)

\begin{lemma} It is possible to choose $k$, $\delta$ and $p$ such that
 for any path $\tau\in H$, $f_s(\tau)$ does not belong to $H$.
\end{lemma}
\begin{proof}

Consider
$$h_{n(s)}(\tau)=\frac{P_{n(s)+1}^{z,w}(\tau(n(s)+1))}{P_{n(s)+1}^{z',w'}(\tau(n(s)+1))} :
\frac{P_{n(s)}^{z,w}(\tau(n(s)))}{P_{n(s)}^{z',w'}(\tau((n(s))))}.$$
Note that $\tau(n(s))$ does not change under the map $f_s$ while $
\tau(n(s)+1)$ does. Set $N:=n(s)+1$ and examine the level $N$. The
essence of our modification is that in the new path for a single $i$
the number $\lambda_i$ (which is the length of the $i$--th row)
decreased by $1$. Recall that
$$
P_N^{z,w}(\lambda)=\left( S_N(z,w) \right) ^{-1}\cdot \prod\limits_{i=1}^N
\left|\frac{1}{\Gamma(z-\lambda_i+i)\Gamma(w+N+1+\lambda_i-i)}\right|^2\cdot
\operatorname{Dim}_N(\lambda)
$$

Consequently, under the transform $\lambda_i\mapsto\lambda_i-1$ this
density is multiplied by

$$
  \left|\frac{w+N+1+k}{z-k}\right|^2
  \cdot (\text{factors not depending on } z,w ).
$$
We conclude that the ratio of densities
$\frac{P_{N}^{z,w}}{P_{N}^{z',w'}}$ is multiplied by
$$
 \left|\frac{z'-k}{z-k}\right|^2 \cdot
 \left|\frac{w+N+1+k}{w'+N+1+k}\right|^2
 \eqno (*)
$$

Examine the first factor. Note that if $z\neq z'$ and $z\neq
\overline{z'}$, then one can find $k>0$ and $\nu>0$ such
that
$$
 \left|\left|\frac{z'-k}{z-k}\right|^2-1\right|>\nu
$$
and $\Re(k+w)>0$. Let us fix such $k$ and $\nu$.

Now consider the second factor in $(*)$. It is clear that this factor approaches $1$ as $N$ tends to
infinity. Assume that
$N_p$ is so large that for any $N>N_p$ the factor differs from $1$ by less
than $\nu/100$.

Now we can conclude that the product $(*)$ differs from $1$ by more
than $\nu/2$.

Choose $\delta \le \nu/100$. It is evident that for such $\delta$
the ratios of densities before and after applying $f_s$ can
not both belong to the $\delta$--neighborhood of $1$  (as
$f_s$ multiplies the ratio of densities by the number $(*)$ that is
far enough from $1$).

Choose $p$ such that the inequality $N_p\ge t$ also holds. From all
that has been said above it follows that in any path from $H$ there
are no $\delta$--fluctuations at  levels starting from $N_p$, but
after applying $f_s$ such $\delta$--fluctuations inevitably appear.
Thus the image with respect to $f_s$ of a path from the set $H$ does
not belong to $H$.
\end{proof}

\begin{lemma} Suppose that the parameters are chosen as in Lemma 6. If $s\neq s'$, then
the sets $f_s(H)$ and $f_{s'}(H)$ are mutually disjoint.
\end{lemma}
\begin{proof}
 It follows from the proof of the last lemma that in the paths from $f_{s}(H)$ and $f_{s'}(H)$ $\delta$--fluctuations
 appear at distinct levels, thus, these paths are distinct.
\end{proof}

Now assume that we have $E$ distinct $[N_p,N_q]$--fragments of paths
from the set $H$. Every $f_s$ produces $E$ additional fragments and
all $f_s$ together produce $E(q-p)$ distinct fragments.
Consequently, the fraction of the $[N_p,N_q]$--fragments of the
paths from the set $H$ in all $[N_p,N_q]$--fragments (hence also the
measure of the set $H$) does not exceed $\frac{E}{E+E(q-p)}$.
Choosing the number $q$ large enough we can obtain arbitrary small
values of the last expression. Thus, Theorem 4 is proved.

\end{proof}

\section{Proof of the proposition about admissible paths}
\begin{proof}[Proof of Proposition 5]
 Let us interpret the $(z,w)$--measure as a Markovian process on the Gelfand-Tsetlin graph with transition
 probabilities
$$p_{z,w}(\lambda|\mu)=\frac{P^{z,w}_{N+1}(\lambda)}{P^{z,w}_N(\mu)},\quad\lambda\in\mathbb{GT}_{N+1},\quad\mu\in\mathbb{GT}_N.$$

 Thus, from now on we may speak about the random path $\tau$ or about the Markov chain $\tau(N)$.

\begin{lemma}
 Assume $k$ is so large that $\Re (k+w)>0$. Fix an arbitrary signature  $\mu$
 at level $N$ (thus also fix the corresponding "positive" Young diagram).
 The following two estimates on conditional probability holds
 $${\rm Prob} \{\exists \square\in \tau^+(N+1)/\tau^+(N): c(\square)=k \mid \tau(N)=\mu
\}<\frac{c(k)}{N},$$
 \begin{multline*}
{\rm Prob} \{\exists \square\in \tau^+(N+1)/\tau^+(N): c(\square)=k
\mid
\\ \mid  \tau(N)=\mu, \exists \square\in \tau^+(N+1)/\tau^+(N): c(\square)=k-1
  \}<\frac{c(k)}{N},\end{multline*}

\end{lemma}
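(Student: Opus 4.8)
The goal is to bound the probability that a box of content $k$ is added to the positive diagram when passing from level $N$ to level $N+1$, given the signature $\mu$ at level $N$. The natural strategy is to work directly with the transition probabilities $p_{z,w}(\lambda \mid \mu) = P^{z,w}_{N+1}(\lambda)/P^{z,w}_N(\mu)$ and sum over exactly those $\lambda \succ \mu$ for which $\mu^+ \to \lambda^+$ adds a box in the (unique) column/row position with content $k$. Writing $\mu^+ = (\mu_1 \geq \dots)$ and noting that adding a box of content $k$ means that for the row index $i_0$ with $\mu_{i_0} - i_0 + 1 = k+1$ wait --- more carefully: the position where a content-$k$ box can be appended to $\mu^+$ is determined by $\mu^+$, and ``$\exists\square$ with $c(\square)=k$ in $\lambda^+/\mu^+$'' is the event that $\lambda_{i_0} = \mu_{i_0}+1$ for that distinguished row $i_0$ (with $\lambda_{i_0}$ otherwise equal to $\mu_{i_0}$; the remaining rows of $\lambda^+$ and all of $\lambda^-$ are unconstrained subject to interlacing). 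So the conditional probability in question is a ratio: the $M_N$-weighted sum of $P^{z,w}_{N+1}(\lambda)$ over the ``box-added'' $\lambda$'s divided by the sum over all $\lambda \succ \mu$.

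First I would make the ratio explicit using the product formula for $P^{z,w}_N$. The only $z,w$-dependent factor in $P^{z,w}_{N+1}(\lambda)$ that changes between the ``box added at row $i_0$'' configuration and a reference configuration is
$$
\left|\frac{1}{\Gamma(z-\lambda_{i_0}+i_0)\,\Gamma(w+N+2+\lambda_{i_0}-i_0)}\right|^2,
$$
and when $\lambda_{i_0} = \mu_{i_0}+1$ with $\mu_{i_0}-i_0 = k-1$, i.e. after the shift $\lambda_{i_0}-i_0 = k$, the relevant Gamma-factor ratio relative to the unshifted value is controlled by $|z-k|^{-2}$ and $|w+N+2+k|^{-2}$ (using $\Gamma(x+1)=x\Gamma(x)$). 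The key point is that $|w+N+2+k|^{-2} \sim N^{-2}$ is small, while $\Re(k+w)>0$ guarantees we are not dividing by a Gamma-value that has a pole, so $|z-k|^{-2}$ and $|\Gamma(z-k)|^{-2}$ stay bounded. I would also need the $\operatorname{Dim}_{N+1}$ factors; here the Weyl dimension formula gives $\operatorname{Dim}_{N+1}(\lambda) = \prod_{i<j}\frac{\lambda_i-\lambda_j+j-i}{j-i}$, and shifting one coordinate $\lambda_{i_0}$ up by one multiplies this by a product of ratios $\frac{\ell_{i_0}-\ell_j}{\ell_{i_0}-\ell_j - 1}$ over $j\neq i_0$ (where $\ell_i = \lambda_i + N+1 - i$), which is at most polynomially large in $N$ but --- crucially --- is dominated by a factor linear in $N$ after accounting for cancellation with the denominator sum. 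Putting these together, the ``numerator'' sum is bounded by $C \cdot N^{-2}\cdot(\text{poly})\cdot$(denominator sum) wait, I should be more careful: I want to show numerator/denominator $< c(k)/N$, so I need the gain from $|w+N+2+k|^{-2}$ to beat the $\operatorname{Dim}$ loss by exactly one power of $N$.

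The cleanest route is probabilistic rather than a brute ratio estimate: interpret $\operatorname{Prob}\{\exists\square: c(\square)=k\} = \mathbb{E}_\mu[\lambda_{i_0} - \mu_{i_0}]$ since $\lambda_{i_0} - \mu_{i_0} \in\{0,1\}$ on the relevant event (strictly, one must check no content-$k$ box can appear in two different rows, which follows from interlacing and the definition of content). Thus it suffices to estimate the conditional expectation of the increment of a single row. Here I would use the explicit form of the co-transition ("backward") probabilities or the known formula for the one-step marginal of $\lambda_{i_0}$ under $M_{N+1}^{z,w}(\cdot\mid\mu)$, which is a discrete distribution on an interval of length $O(N)$ whose weight at the point $\mu_{i_0}+1$ carries the factor $|w+N+2+k|^{-2}\cdot|\Gamma(z-k)|^{-2}$ relative to a normalization that is bounded below; summing the increment against such weights yields a bound $c(k)/N$ because the "mass above $\mu_{i_0}$" is already $O(1/N)$ and the expected overshoot beyond the first step contributes the same order.

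For the second (conditional-on-$k-1$) estimate, the argument is the same with one extra observation: conditioning additionally on the event that a content-$(k-1)$ box is also added only restricts the configuration further, so I would show the conditional probability is still $O(1/N)$ by the same Gamma-factor mechanism at row $i_0$, noting the content-$(k-1)$ box lives in the next row $i_0+1$ (by the coordinate $(i+1,j)$ remark in condition 4) wait, re-reading condition 4: content $k-1$ at coordinates $(i+1,j)$ --- yes, so it is a different row, its weight factor is independent of the row-$i_0$ factor, and conditioning on it does not remove the $|w+N+k|^{-2}$ suppression governing row $i_0$; a ratio-of-sums estimate then gives the claim.

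\textbf{Main obstacle.} The delicate point is the interaction between the $\operatorname{Dim}_{N+1}(\lambda)$ factor and the Gamma-factors: individually the dimension ratio under $\lambda_{i_0}\mapsto\lambda_{i_0}+1$ can be as large as $O(N)$ (or larger if many rows cluster near row $i_0$), so one must show this is compensated by summing over the full interlacing pattern --- i.e. the normalization $P^{z,w}_N(\mu)$ already ``knows'' about these dimension factors and cancels them, leaving only the genuine $N^{-2}\times N = N^{-1}$ decay. Making this cancellation rigorous, uniformly in $\mu$ (so that $c(k)$ does not secretly depend on $N$ or $\mu$), is where the real work lies; I expect one must either invoke an exact product formula for the conditional law of a single row-increment under the $(z,w)$-measure, or carefully telescope the Weyl dimension quotient against the Gamma-product. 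The hypothesis $\Re(k+w)>0$ enters precisely to keep $|\Gamma(z-k)\Gamma(w+N+2+k)|^{-2}$ from blowing up (no poles) so that the constant $c(k)$ is finite.
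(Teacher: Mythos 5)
Your plan identifies the right mechanism --- the factor $|w+N+1+k|^{-2}$ coming from the Gamma-product must beat the $O(N)$ growth of the Weyl-dimension ratio by exactly one power of $N$ --- and this is indeed the skeleton of the paper's argument. But the step you yourself flag as ``where the real work lies'' is precisely the content of the lemma, and your proposal does not supply it. The paper resolves it by a conditioning device absent from your write-up: fix not only $\tau(N)=\mu$ but also \emph{all} coordinates $\tau_a(N+1)$ for $a\neq i$. Under this finer conditioning the law of the single remaining coordinate $\tau_i(N+1)$ is an explicit one-dimensional distribution supported on $\{j-1,j,j+1,\dots\}$, and the ratio $p_m$ of its weight at $j-1+m$ to its weight at $j-1$ is a closed-form product of Gamma-ratios and linear factors. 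The dimension part of $p_m$ is bounded by $(N+1)_m/m!$, the Gamma part by $|(k-z)_m|^2/|(w+N+1+k)_m|^2$; the hypothesis $\Re(k+w)>0$ is used exactly to absorb $(N+1)_m$ into $|\Re(w+N+1+k)|_m$ (not, as you suggest, to keep Gamma-values away from poles --- non-integrality of $z$ already does that), and the resulting series $\sum_m p_m$ is a Gauss hypergeometric ${}_2F_1$ evaluated at $1$, which stays bounded as $N\to\infty$ after a factor $1/N$ has been extracted from the $m=1$ term. Without some such explicit summation, uniform in $\mu$, the bound $c(k)/N$ is simply not established; your text acknowledges this by offering two possible routes without carrying out either.

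A second, smaller error: the event ``a box of content $k$ is added'' is $\tau_i(N+1)\ge j$, not $\tau_i(N+1)=\mu_i+1$; the $i$-th row may grow by any number of boxes (of contents $k,k+1,\dots$ simultaneously), so $\lambda_{i_0}-\mu_{i_0}$ is not $\{0,1\}$-valued and your identity $\mathrm{Prob}\{\cdot\}=\mathbb{E}_\mu[\lambda_{i_0}-\mu_{i_0}]$ is false (only the inequality $\le$ survives, via Markov). This is exactly why the paper must sum over all $m\ge1$ rather than estimate a single ratio of two weights. Your treatment of the second estimate (extra conditioning on a content-$(k-1)$ box) is fine in spirit: since the paper's proof conditions on all rows of $\tau(N+1)$ other than the $i$-th anyway, the additional conditioning is subsumed, as the Remark following the lemma indicates.
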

{\bf Remark.} Instead of $\mu$ we may fix the whole path $(\mu^1,\dots,\mu^{N})$ from the first level to the signature $\mu^N$,
 and replace the condition $\tau(N)=\mu$ by the condition $\tau(1)=\mu^1,\dots, \tau(N)=\mu^N.$
 The centrality property of the measure implies that this change does not affect either our statement or its proof.

\begin{proof}
 Note that for the fixed $\mu$ there is at most one box with content $k$ that can be added.
 Denote the coordinates of this box by $(i,j)$. Let us compare the conditional probability of the event
 $\tau_i(N+1)=j-1$ , given that $\tau(N)=\mu$,
 with the conditional probability of the event $\tau_i(N+1)\ge j$. We will show that the latter differs from the former by the factor
 $\frac{c(k)}{N}$, which clearly implies the lemma.

 Besides fixing signature $\mu$, fix additionally all row lengths at level $N+1$, except the $i$--th row, in an arbitrary way.
 Thus, now we condition on $\tau(N)=\mu$,
$\tau_1(N+1)=\lambda_1,\dots,\tau_{i-1}(N+1)=\lambda_{i-1},\tau_{i+1}(N+1)=\lambda_{i+1},\dots,\tau_{N+1}(N+1)=\lambda_{N+1}$.

 Denote by $p_m$ the ratio of the conditional probability of
the event $\tau_i(N+1)=j-1+m$ to the conditional probability of the
event $\tau_i(N+1)=j-1$.
 Our problem reduces to verifying the inequality $\sum\limits_{m=1}^\infty p_m < \frac{c(k)}{n}$.
 Using the definition of the transition probability and Weyl's dimension formula we obtain:
 $$ \operatorname{Dim}_{N+1}(\lambda)=
  \prod\limits_{1\le u<v \le N+1}\frac{\lambda_u-\lambda_v+v-u}{v-u},
 $$

 \begin{multline*}
 p_m=\left| \frac{\Gamma (z-k+1)\Gamma (w+N+1+k)}{\Gamma (z-k+1-m)\Gamma (w+N+1+k+m)}\right|^2\\
 \cdot
     \prod_{a=1}^{i-1}\frac{\lambda_a-a-(k-1+m)}{\lambda_a-a-(k-1)}\cdot
     \prod_{a=i+1}^{N+1}\frac{(k-1+m)-(\lambda_a-a)}{(k-1)-(\lambda_a-a)}.
 \end{multline*}
 The first product is less than $1$ and we can simply omit it, for
the second one we use the inequality
$$
 \frac{(k-1+m)-(\lambda_a-a)}{(k-1)-(\lambda_a-a)}=
\frac{m+(k-1-\lambda_a+a)}{k-1-\lambda_a+a}\le \frac{m+(a-i)}{a-i}.
$$
Consequently,
 \begin{multline*}
 p_m\le \left|\frac{\Gamma (z-k+1)\Gamma (w+N+1+k)}{\Gamma (z-k+1-m)\Gamma (w+n+1+k+m)}\right|^2
      \prod_{a=i+1}^{N+1}\frac{m+(a-i)}{a-i}\\ \le
   \left|\frac{\Gamma (z-k+1)\Gamma (w+N+1+k)}{\Gamma (z-k+1-m)\Gamma (w+N+1+k+m)}\right|^2
     \prod_{a=1}^{N}\frac{m+a}{a}\\
 =\frac{|(k-z)_m|^2}{|(w+N+1+k)_m|^2}\cdot\frac{(m+N)!}{N!m!}=
  \frac{|(k-z)_m|^2 (N+1)_m}{|(w+N+1+k)_m|^2 m!}\\
  \le \frac {\left( \left(|\Re (k-z)|+|\Im z|\right)_m\right)^2}
            {|\Re(w+N+1+k)|_m m!}\cdot
      \frac {(N+1)_m}{|\Re(w+N+1+k)|_m}
 \end{multline*}
 Note that, if $\Re(k+w)>0$, then the second factor is less than $1$, thus,
 \begin{multline*}
 p_m\le \frac {\left( \left(|\Re (k-z)|+|\Im z|\right)_m\right)^2}
            {|\Re(w+N+1+k)|_m m!}\\=
 \frac {\left( \left(|\Re (k-z)|+|\Im z|\right)\right)^2}
            {|\Re(w+N+1+k)| m }
 \frac {\left( \left(|\Re (k-z)|+|\Im z|+1\right)_{m-1}\right)^2}
            {|\Re(w+N+2+k)|_{m-1} (m-1)!}\\
 \le \frac{c(k)}{N}
  \frac {\left( \left(|\Re (k-z)|+|\Im z|+1\right)_{m-1}\right)^2}
            {|\Re(w+N+2+k)|_{m-1} (m-1)!}
 \end{multline*}
 It means that
 $$
 \sum\limits_{m=1}^\infty p_m\le \frac{c(k)}{N}
   F(|\Re (k-z)|+|\Im z|+1,|\Re (k-z)|+|\Im z|+1,
          |\Re(w+N+2+k)|;1)
 $$
 Here $F(a,b,c;1)$ is the hypergeometric function
 $_2F_1$ with parameters $a,b,c$ and the argument $1$.

 In our case $a=b=|\Re (k-z)|+|\Im z|+1$,
 $c=|\Re(w+N+2+k)|$.

 Using Gauss' formula for the value of the hypergeometric function at $1$ (see, for instance, \cite{BE})
 we obtain
 $$
 \sum\limits_{m=1}^\infty p_m\le \frac{c(k)}{N} \frac{\Gamma(c)\Gamma(c-2a)}{\Gamma(c-a)\Gamma(c-a)}
 $$
 When $N\to\infty$, the parameter $c$ tends to infinity, while $a$ does not change. Thus,
 $\frac{\Gamma(c)\Gamma(c-2a)}{\Gamma(c-a)\Gamma(c-a)}\to 1$.
 Consequently,
 $$\sum\limits_{m=1}^\infty p_m\le \frac{c_1(k)}{N}.$$

\end{proof}

\begin{lemma}
 Choose two large enough integers $m$ and $n$. Consider the following event:
 \begin{enumerate}
 \item There exist $n< l < m$ and $\square$ such that $c(\square)=k$,
$\square\in \tau^+(l+1)/\tau^+(l)$
 \item Either ${\exists \square : c(\square)=k+1,\quad \square \in \tau^+(l+1)/\tau^+(l)}$
or ${\exists \square : c(\square)=k-1,\quad \square \in
\tau^+(l+2)/\tau^+(l+1)}$.
 \end{enumerate}
The probability of this event does not exceed $\frac{c_1(k)}{n}$
\end{lemma}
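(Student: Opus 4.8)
The plan is a straightforward union bound over levels, reducing everything to Lemma 9 and its shifts. Write $\mathcal E$ for the event in the statement, and for each $l$ with $n<l<m$ let $A_l$ be the event that $\tau^{+}(l+1)/\tau^{+}(l)$ contains a box of content $k$, let $B_l$ be the event that $\tau^{+}(l+1)/\tau^{+}(l)$ contains a box of content $k+1$, and let $C_l$ be the event that $\tau^{+}(l+2)/\tau^{+}(l+1)$ contains a box of content $k-1$. Then $\mathcal E=\bigcup_{n<l<m}\bigl(A_l\cap(B_l\cup C_l)\bigr)$, so by subadditivity it suffices to bound ${\rm Prob}(A_l\cap B_l)$ and ${\rm Prob}(A_l\cap C_l)$. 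I claim each is at most $C(k)/l^{2}$ for a constant $C(k)$ depending only on $k$; since $\sum_{l>n}l^{-2}<1/n$, summing over $l>n$ then gives the asserted bound with $c_1(k):=C(k)$. Here $n$ (and $m>n$) are taken large enough for the estimates of Lemma 9 and of its shifts by $\pm1$ to be valid, and $k$ is taken large enough that $\Re(k-1+w)>0$, so that Lemma 9 applies verbatim with $k$ replaced by $k+1$ or $k-1$.

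For $A_l\cap B_l$ I would condition on $\tau(l)=\mu$ and use the chain rule:
$$ {\rm Prob}(A_l\cap B_l\mid\tau(l)=\mu)={\rm Prob}(A_l\mid\tau(l)=\mu)\cdot{\rm Prob}(B_l\mid\tau(l)=\mu,\,A_l). $$
The first factor is $<c(k)/l$ by the first estimate of Lemma 9. The second factor is the conditional probability that a content-$(k+1)$ box is added at level $l$ given that a content-$k$ box is added at the same level, and this is $<c(k+1)/l$ by the \emph{second} estimate of Lemma 9 with $k$ replaced by $k+1$. Averaging the product over $\mu$ gives ${\rm Prob}(A_l\cap B_l)<c(k)c(k+1)/l^{2}$.

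For $A_l\cap C_l$ the point is that $A_l$ is a function of the pair $(\tau(l),\tau(l+1))$ while $C_l$ is a function of the pair $(\tau(l+1),\tau(l+2))$; hence, by the Markov property of the $(z,w)$-chain, $A_l$ and $C_l$ are conditionally independent given $\tau(l+1)$. The first estimate of Lemma 9, applied with level $N=l+1$ and $k$ replaced by $k-1$, bounds ${\rm Prob}(C_l\mid\tau(l+1))$ by $c(k-1)/(l+1)$ pointwise, while ${\rm Prob}(A_l)<c(k)/l$ by the first estimate of Lemma 9 with $N=l$ (averaged over $\tau(l)$). Conditioning on $\tau(l+1)$ and combining,
$$ {\rm Prob}(A_l\cap C_l)={\rm E}\bigl[{\rm Prob}(A_l\mid\tau(l+1))\cdot{\rm Prob}(C_l\mid\tau(l+1))\bigr]\le\frac{c(k-1)}{l+1}\,{\rm Prob}(A_l)<\frac{c(k)c(k-1)}{l^{2}}. $$
Adding up, ${\rm Prob}(\mathcal E)\le\sum_{n<l<m}\bigl({\rm Prob}(A_l\cap B_l)+{\rm Prob}(A_l\cap C_l)\bigr)<c(k)\bigl(c(k+1)+c(k-1)\bigr)\sum_{l>n}l^{-2}<c_1(k)/n$.

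The one step that is not completely mechanical is the treatment of $A_l\cap C_l$: since the two alternatives in the second condition of the lemma live at \emph{different} levels, the two-event estimate of Lemma 9 cannot be applied to them directly, and one must first decouple $A_l$ from $C_l$ using the Markov property of the chain and only then invoke the one-event estimate of Lemma 9 twice. Everything else is the routine union bound sketched above, and I expect no essential difficulty beyond bookkeeping.
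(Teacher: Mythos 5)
Your argument is correct, and it rests on the same engine as the paper's proof --- the two one-level conditional estimates of the preceding lemma (the paper's Lemma 8; note that what you call ``Lemma 9'' throughout is that lemma, since Lemma 9 is the statement being proved) together with the Remark allowing conditioning on the whole past --- but the bookkeeping is genuinely different. The paper decomposes the event according to the \emph{minimal} level $i\in(n,m)$ at which a content-$k$ box is added, obtaining pairwise disjoint pieces $L^i$; on each piece a single application of Lemma 8 bounds the conditional probability of the offending second condition by $(c(k+1)+c(k-1))/i\le c_1(k)/n$, and the disjointness $\sum_i\rho(L^i)\le 1$ finishes the proof. You instead run a union bound over \emph{all} levels $l$ and compensate for the lost disjointness by extracting a factor of order $1/l^{2}$ from each term --- via the chain rule for $A_l\cap B_l$, and via conditional independence of $A_l$ and $C_l$ given $\tau(l+1)$ (the Markov property) for $A_l\cap C_l$ --- so that $\sum_{l>n}l^{-2}<1/n$ closes the estimate. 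Your version costs one extra application of Lemma 8 per level but proves something slightly stronger: it controls the probability that the second condition occurs at \emph{any} level carrying a content-$k$ box, not only at the minimal one, which is more than Proposition 5 requires. Two details you flag are genuinely needed and worth keeping explicit: the shift $k\mapsto k-1$ requires $\Re(k-1+w)>0$ for Lemma 8 to apply (a hypothesis the paper's own proof also implicitly uses), and your decoupling of $A_l$ from $C_l$ is the right fix for the fact that the two-event estimate of Lemma 8 only covers boxes added at the same level; alternatively one could reach the same bound by the tower property, conditioning on the full path up to level $l+1$ as licensed by the Remark.
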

\begin{proof}
 Let us represent the event ``There exist $n<l<m$ and $\square$ such that $c(\square)=k$, $\square\in \tau^+(l+1)/\tau^+(l)$''
 as the disjoint union of the events
``The minimal $n< l < m$ for which there exist $\square$ such that
$c(\square)=k$, $\square\in \tau^+(l+1)/\tau^+(l)$ is $i$''
($n<i<m$).
 Denote the last event by $L^i$.
 Let $\tau^i$ be a path going from the first level of the graph of signatures to level $i$, such that
 the minimal $n< l < m$ for which there exist $\square$ such
that $c(\square)=k$, $\square\in \tau^+(l+1)/\tau^+(l)$, is $i$.
 Then let us write $L^i$ as the disjoint union of the elementary events $C^i_{\tau^i}$ corresponding to
 the various $\tau^i$.

 Applying the previous lemma for every $C^i_{\tau^i}$ and then summing up all the estimates completes the proof.
\end{proof}

Now we continue the proof of Proposition 5.

Note that Lemma 9 implies the following: If $N_1$ is large enough
and the $N_i$'s increase at least as fast as $2^i$, then the third
condition and the fourth condition in the definition of
$B(k,\{N_i\})$ hold on a set with the measure arbitrarily close to
$1$ automatically.

Given $\varepsilon$ we choose $N_1$ so large that the measure of the
set, where then the third condition or the fourth condition in the
definition of $B(k,\{N_i\})$ might not hold, does not exceed
$\varepsilon /2$. Denote the complement of this set by $S$.

Let $A_1=S\cap G^+$. Obviously, $\rho_{z,w}(G^+\setminus A_1)\le \varepsilon/2$.

Denote by $A_2^R\subset A_1 $ ($R>N_1$) the subset containing paths
such that a box with content $k$ is added at a level from
$\{N_1,N_1+1,\dots,R\}$. It is clear that $A_2^R \subset A_2^{R+1}$.
We know that along every path from the set $A_1\subset G^+$ boxes
with content $k$ are added infinitely many times. Consequently, the
sets $A_2^R$ exhaust $A_1 $. Thus, we can choose a number
$N_2>2\cdot N_1$ such that $$\rho_{z,w}(A_1\setminus
A_2^{N_2})<\varepsilon/4.$$ Let $A_2=A_2^{N_2}$

Further we choose a number
$N_3>2\cdot N_2$ and the set $A_3\subset A_2$ in a similar way.
The inequality $\rho_{z,w}( A_2\setminus A_3)<\varepsilon/8$ holds. And so on.

The intersection of all sets $A_i$ is the desired set $B_\varepsilon$.

\end{proof}

\section{The measure of the set of paths contained in a thick hook}

We say that both ``positive'' and ``negative'' Young diagrams of a
path $\tau=(\tau(1),\tau(2),\dots)$ are contained in a thick hook if
both $\bigcup\limits_{N}\tau^+(N)$ and $\bigcup\limits_{N}\tau^-(N)$
are not the whole quadrant (i.e. these sets are so-called thick
hooks). Denote the set of all such paths by $ \overline G$. Clearly
$\overline G=\mathcal T \setminus (G^+\cup G^-)$.

\begin{theorem}
 The $(z,w)$--measure of the set $\overline G$
 is equal to zero.
\end{theorem}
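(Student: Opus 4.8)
\medskip

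The plan is to exhibit $\overline G$ as a countable union of sets of $(z,w)$--measure zero, each obtained by ``freezing'' one row of the positive diagram and one row of the negative diagram. The key preliminary remark is a monotonicity: for a path $\tau$ the interlacing inequalities give $\lambda_a(N)\le\lambda_a(N+1)$ for every fixed $a$, so $N\mapsto\lambda_a(N)$ is non-decreasing, and likewise the length of the $a$-th row of $\tau^-(N)$ is non-decreasing in $N$. Hence $\overline\tau^+$ is not the whole quadrant if and only if $\lambda_a(N)$ stabilizes at a finite value for some $a$, and similarly for $\overline\tau^-$, so
$$
 \overline G=\bigcup_{a,a',b,b'\ge1}E_{a,b,a',b'},\qquad
 E_{a,b,a',b'}=\bigl\{\tau:\lambda_a(N)\le b\ \text{and}\ -\lambda_{N+1-a'}(N)\le b'\ \text{for all }N\bigr\}.
$$
It thus suffices to prove $\rho_{z,w}(E_{a,b,a',b'})=0$ for each fixed quadruple. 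On $E_{a,b,a',b'}$ the signature $\tau(N)$ is rigid: all of its coordinates, except at most $a-1$ of the largest and at most $a'-1$ of the smallest, lie in the fixed finite window $\{-b',\dots,b\}$; equivalently, in the particle coordinates $\ell_i=\lambda_i+N-i$ all but $O(1)$ of the $N$ particles fill an interval of length $\approx N$ whose left end does not move with $N$.

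To estimate $\rho_{z,w}(E_{a,b,a',b'})$ I would use $E_{a,b,a',b'}\subset\{\tau:\lambda_a(N)\le b,\ -\lambda_{N+1-a'}(N)\le b'\}$ for every $N$, so $\rho_{z,w}(E_{a,b,a',b'})\le\sum_\lambda M_N^{z,w}(\lambda)$, the sum over $\lambda\in\mathbb{GT}_N$ of the rigid shape above, and it is enough to show this tends to $0$. Comparing each such $\lambda$ with $(0^N)$ and using $\operatorname{Dim}_N(0^N)=1$, the ratio $M_N^{z,w}(\lambda)/M_N^{z,w}(0^N)$ equals $\operatorname{Dim}_N^2(\lambda)$ times an explicit quotient of $\Gamma$--factors. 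The factors attached to the frozen bulk coordinates only shift the window and, once their $N$--dependent arguments are combined with $\operatorname{Dim}_N$, contribute only a quantity bounded in $N$; the factors attached to the at most $a-1$ large positive coordinates $\lambda_i=x_i$, respectively the at most $a'-1$ large negative ones $\lambda_i=-y_i$, contribute for each such coordinate a penalty which in the limit is essentially $\bigl|\binom{z}{x_i}\bigr|^2$, respectively $\bigl|\binom{w}{y_i}\bigr|^2$ (the super-exponential decay of $|\Gamma(w+N+1+x_i-i)^{-1}|^2$ is cancelled by the super-exponential growth of the Vandermonde, leaving only polynomial decay in the size of the excursion). Summing over the finitely many escaped coordinates then produces series of type $\sum_m\bigl|\binom{z}{m}\bigr|^2$ and $\sum_{m'}\bigl|\binom{w}{m'}\bigr|^2$, and together with the vanishing of the total $(z,w)$--mass of the window-only signatures (equivalently $M_N^{z,w}(0^N)\to0$) this forces $\rho_{z,w}(E_{a,b,a',b'})=0$. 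A combinatorial alternative in the spirit of Sections~3--4 is also available: by centrality the $[N_p,N_q]$--fragments with fixed endpoints are equidistributed, so one may instead build, for large $q$, many injective and mutually incompatible modifications of the $E_{a,b,a',b'}$--fragments (moving around additions of boxes of small content, or inserting a row inside the frozen window) and conclude that their fraction tends to $0$.

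The hard part will be this estimate. The dimension $\operatorname{Dim}_N^2(\lambda)$ is very large, so one must show the $\Gamma$--weight beats it for \emph{all} pinched configurations at once and uniformly in $N$, while correctly summing the contributions of the unboundedly large excursions and controlling the normalization $S_N(z,w)$, i.e.\ the asymptotics of $M_N^{z,w}(0^N)$. The hypothesis $\Re(z+w)>-1/2$ enters precisely here: it is the borderline at which the relevant $\Gamma$/binomial series converge (or diverge slowly enough), so the estimate must be carried out carefully, and when one of $\Re z,\Re w$ is $\le-1/2$ the positive and negative excursions must be treated jointly rather than separately.
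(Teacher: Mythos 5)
Your first step---writing $\overline G$ as a countable union of ``frozen row'' events and reducing to showing each has measure zero---matches the paper, which uses the sets $R(i,j,l,m,t)$ defined by $\tau_i(N)=j-1$, $\tau_{i-1}(N)\ge j$, $\tau_{N-l}(N)=m+1$, $\tau_{N-l+1}(N)\le m$ for all $N\ge t$. But from there the two arguments diverge, and your route has a genuine gap. The paper does \emph{not} attempt the static estimate $\sum_{\lambda\ \mathrm{pinched}}M_N^{z,w}(\lambda)\to0$. Instead it treats $\tau(N)$ as a Markov chain and proves (Lemma 11) that, conditioned on the frozen configuration at level $N$, the probability of remaining frozen at level $N+1$ is at most $1-\varepsilon/N$. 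The key computation is local: comparing the transition probabilities to $\lambda$ and to $\lambda'$ (the same signature with $\lambda_i$ increased from $j-1$ to $j$), the normalization $S_N(z,w)$ and all $\Gamma$--factors except the one attached to row $i$ cancel, leaving $\bigl|\tfrac{z-j+i}{w+N+1+j-i}\bigr|^2\cdot\tfrac{\operatorname{Dim}_{N+1}(\lambda')}{\operatorname{Dim}_{N+1}(\lambda)}$; the first factor is of order $N^{-2}$, and the dimension ratio is of order $N$ \emph{because the path is pinched} (the bulk coordinates $\lambda_p-p$ sweep out almost the whole range $\{j-i-N,\dots,j-i-2\}$, so the Weyl product telescopes to $\asymp N$). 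Multiplying $\prod_{N>t}(1-\varepsilon/N)=0$ finishes the proof, using only the divergence of the harmonic series.

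Your static approach is not wrong in principle---since the level-$N$ pinching events are decreasing in $N$, the theorem is indeed equivalent to $M_N^{z,w}(\mathrm{pinched})\to0$---but you have reduced the theorem to an estimate that is essentially as hard as the theorem itself and you do not prove it. The assertions that the frozen bulk contributes ``only a quantity bounded in $N$,'' that the super-exponential $\Gamma$--decay is exactly cancelled by the Vandermonde leaving polynomial tails, and that the normalization $M_N^{z,w}(0^N)$ can be controlled, are precisely the content of the hard asymptotic analysis you defer; none of them is verified, and the summation over all pinched $\lambda$ uniformly in $N$ is the whole difficulty. Your remark that $\Re(z+w)>-1/2$ ``enters precisely here'' is also misleading: that hypothesis guarantees the measures $M_N^{z,w}$ exist at all, but the paper's proof of this theorem uses only that $z$ is not an integer (so that $z-j+i\neq0$). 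The fix is to replace your global mass estimate by the local one-step comparison above: work with ratios of transition probabilities so that $S_N(z,w)$ and all but one $\Gamma$--factor drop out, and let the pinching hypothesis itself supply the factor of $N$ from the dimension ratio.
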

\begin{proof}

 Denote by $R(i,j,l,m,t)$ the set of paths $\tau$ satisfying for every $N\ge t$
\begin{enumerate}
 \item $\tau_i(N)=j-1$,
 \item $\tau_{i-1}(N)\ge j$,
 \item $\tau_{N-l}(N)=m+1$,
 \item $\tau_{N-l+1}(N)\le m$.
\end{enumerate}
 Clearly, $\overline G$ is a union of various $R(i,j,l,m,t)$. Thus,
 to verify the claim it is sufficient to prove
 that for any quintuple $(i,j,l,m,t)$, $\rho_{z,w}(R(i,j,l,m,t))=0$ holds.

 It is convenient to view $\tau(N)$ as a Markov chain again.

\begin{lemma}
 Suppose $N$ is large enough.
Denote by $p_N$ the conditional probability of the event
``$\tau_i(N+1)=j-1$ and $\tau_{N+1-l}(N+1)=m+1$'' given that
$\tau_i(N)=j-1$, $\tau_{i-1}(N)\ge j$, $\tau_{N-l}(N)=m+1$,
$\tau_{N-l+1}(N)\le m$

Then $p_N$ does not exceed $1-\frac{\varepsilon}{N}$.
\end{lemma}
\begin{proof}
 Consider all possible signatures at level $N+1$ which are joined by an edge with
 a given signature $\mu$ at level $N$.
 For every signature $\lambda$ at level $N+1$, such that $\lambda_i=j-1$,
 we introduce a new signature $\lambda '$ as follows.
 All row lengths of $\lambda'$ except for the $i$th row, are the same as in $\lambda$, while $\lambda_i=j$.

 Let us compare the conditional probabilities of the events $\tau(N+1)=\lambda'$ and $\tau(N+1)=\lambda$,
given that $\tau(N)=\mu$.

 This conditional probabilities differ by the factor
 $$\frac{P_N^{z,w}(\lambda ')}{P_N^{z,w}(\lambda)}=
  \left|\frac{z-j+i}{w+N+1+j-i}\right|^2\cdot
  \frac{ \operatorname{Dim}_{N+1}(\lambda')}{ \operatorname{Dim}_{N+1}(\lambda).}
 $$
 If $N$ is large enough, then the first factor in the right-hand side is greater than $const/N^2$.

 By Weyl's dimension formula
 $$ \operatorname{Dim}_{N+1}(\lambda)=
  \prod\limits_{1\le i<j \le N+1}\frac{\lambda_i-\lambda_j+j-i}{j-i}.
 $$

 Consequently,
 $$
 \frac{ \operatorname{Dim}_{N+1}(\lambda')}{ \operatorname{Dim}_{N+1}(\lambda)}
 =\prod\limits_{1\le p<i}\frac{\lambda_p-p-j+i}{\lambda_p-p-(j-1)+i}\cdot
  \prod\limits_{N+1\ge p>i}\frac{j-i-(\lambda_p-p)}{j-i-1-(\lambda_p-p)}.
 \eqno(**)
 $$
 Note that here the first product is bounded from below by the constant $2^{-(i-1)}$.

 Further, all factors in the second product are greater than $1$ and
less than $2$. Moreover, if $i<p<N-l$, then $m<\lambda_p<j$.
 Thus, $\lambda_p-p$ attains all values from $\{j-i-n,\dots ,j-i-2\}$,
 except for $s$ values; and $s$  can be bounded from above by a number not depending on $N$.   We conclude that $(**)$ can be bounded
from below by the following expression:
 $$const\cdot\prod\limits_{g=j-i-2}^{j-i-N}\frac{j-i-g}{j-i-g-1}=
  const\cdot \frac{2}{1}\frac{3}{2}\dots\frac{N}{N-1}=const\cdot N$$
 Therefore, the conditional probabilities differ by a factor which is less than
  $const/N$. Consequently, $p_N\le 1-\frac{\varepsilon}{N}.$
\end{proof}

 We conclude that
 $$\rho_{z,w}(R(i,j,l,m,t))\le\prod\limits_{N=t+1}^\infty p_N\le
 \prod\limits_{N=t+1}^{\infty} \left(1-\frac{\varepsilon}{N}\right)\to 0.$$

 \end{proof}

 The author thinks that the theorem about the disjointness of the measures also holds in a more general case.

 It seems quite plausible that the following stronger statement holds:
 the $(z,w)$--measure of the set of paths contained in a left thick hook
 is equal to zero. (Recall, that a path $\tau$ is contained in a left thick hook, if there exist
 two such numbers $i$ and $j$,  that $\tau_i(N) < j$ for every $N$)

 However, at the moment there is no simple and direct proof of this statement.
 It seems likely that the statement can be verified using arguments of the
 paper \cite{BO2} about limit behavior of the intermediate Frobenius
coordinates for the "positive" and "negative" Young diagrams, corresponding to a signature (tail kernel).

 If our conjecture is true, Theorem 1 can be extended to some additional cases.
 For example, we can state that the representations
 $T_{z,w}$ and $T_{z,w'}$, where $\{w,\overline w\}\neq\{w',\overline{w'}\}$
 are also disjoint.

\section{Appendix. Growth of a diagonal}

Let $\tau$ be an infinite path in the graph of signatures.
Recall that, there exists a unique Young diagram $\lambda^+(N,\tau)$, corresponding
to the signature $\tau(N)$. We call this diagram a ``positive'' Young diagram.

As a corollary from Lemma 8, we prove that for almost every (with
respect to the $(z,w)$--measure) path the length of the diagonal of
the "positive" Young diagram grows at most logarithmically in the
number of the level.

Denote by $\tilde s_N(\tau)$ the length of the main
diagonal of $\lambda^+(N,\tau)$.

If we view
 the $(z,w)$--measure as a probability measure on the set of all paths $\mathcal T$, then
$\tilde s_N$ is a sequence of random variables.

\begin{theorem}
 There exists a positive constant $c(z,w)$ such that almost surely with respect to
the $(z,w)$--measure
 $$
  \varlimsup_{N\to\infty}\frac{\tilde s_N}{\log (N)}\le c(z,w).
 $$

\end{theorem}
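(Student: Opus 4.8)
The plan is to replace the diagonal — whose growth is governed by boxes of content $0$, for which Lemma~8 need not apply when $\Re w\le 0$ — by boxes of a fixed large content, and then to run a conditional Borel--Cantelli argument based on Lemma~8. \textbf{Setup and Step~1 (combinatorial domination).} First I would fix an integer $k$ with $\Re(k+w)>0$, so that Lemma~8 and the Remark following it are available for content $k$, and for a path $\tau$ set $X_N(\tau):=\#\{\text{boxes of content }k\text{ in }\tau^+(N)\}$. Since the diagrams $\tau^+(N)$ are nested and, as is standard, a horizontal strip meets each diagonal at most once, $X_N$ is nondecreasing, jumps by at most $1$ per level, and equals — up to an additive term $\le1$ coming from level $1$ — the number of levels $j<N$ at which a box of content $k$ is added along $\tau$. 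The key elementary observation is the bound $\tilde s_N(\tau)\le X_N(\tau)+k$: if $\tilde s_N=s$ (the inequality being trivial when $s\le k$) then, writing $\lambda=\tau^+(N)$, by the definition of the diagonal length $\lambda_s\ge s$, hence $\lambda_i\ge\lambda_s\ge s\ge i+k$ for all $i=1,\dots,s-k$, so the $s-k$ distinct boxes $(i,i+k)$, $i=1,\dots,s-k$, lie in $\lambda$ and each has content $k$; thus $X_N\ge s-k$. It therefore suffices to produce a constant $C=C(z,w)$ with $X_N\le C\log N$ for all large $N$, $\rho_{z,w}$-almost surely.

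\textbf{Step~2 (conditional tail estimate).} Next I would view $\rho_{z,w}$ as a Markov chain on the Gelfand--Tsetlin graph, with filtration $\mathcal F_N=\sigma(\tau(1),\dots,\tau(N))$, and put $\xi_j=\mathbf 1\{\text{a box of content }k\text{ is added at level }j\}$, so that $X_N=\sum_{1\le j<N}\xi_j$ up to an additive constant $\le1$ and each $\xi_j\in\{0,1\}$. By Lemma~8 together with its Remark (which permits conditioning on the whole past $\mathcal F_j$), after harmlessly enlarging $c(k)$ so that the estimate holds for all $j\ge1$, one has ${\rm Prob}\{\xi_j=1\mid\mathcal F_j\}\le c(k)/j$. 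Since $\{X_N\ge m\}$ is contained in the union over $1\le j_1<\dots<j_m<N$ of the events $\{\xi_{j_1}=\dots=\xi_{j_m}=1\}$, iterated conditioning gives ${\rm Prob}\{\xi_{j_1}=\dots=\xi_{j_m}=1\}\le\prod_{l=1}^m c(k)/j_l$, and summing (using that the $m$-th elementary symmetric function is at most $p_1^m/m!$, that $\sum_{j<N}1/j\le1+\log N$, and that $m!\ge(m/e)^m$) yields
$$
{\rm Prob}\{X_N\ge m\}\le\frac1{m!}\Bigl(\sum_{1\le j<N}\frac{c(k)}{j}\Bigr)^m\le\Bigl(\frac{e\,c(k)(1+\log N)}{m}\Bigr)^m .
$$

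\textbf{Step~3 (Borel--Cantelli and conclusion).} Finally, taking $m=m_N=\lceil C\log N\rceil$ with $C=C(z,w)$ chosen large enough that $e\,c(k)(1+\log N)/m_N\le\tfrac12$ for all large $N$ and $C\log2>1$, the right-hand side above is $\le2^{-m_N}\le N^{-C\log2}$, which is summable in $N$; by the Borel--Cantelli lemma, almost surely $X_N<C\log N$ for all sufficiently large $N$, and Step~1 then gives
$$
\varlimsup_{N\to\infty}\frac{\tilde s_N}{\log N}\le\varlimsup_{N\to\infty}\frac{X_N+k}{\log N}\le C ,
$$
so the theorem holds with $c(z,w)=C$. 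I expect the crux to be Step~1 — recognizing that the diagonal is dominated by boxes of a fixed, conveniently large content, which is precisely the content for which Lemma~8 is available; once this is in place the rest is a routine conditional Borel--Cantelli argument, whose only real subtlety is that the \emph{conditional} form of Lemma~8 (via the Remark) is what upgrades the trivial estimate $\sum_j{\rm Prob}\{\xi_j=1\}=O(\log N)$ to an almost sure statement. The minor checks — ``a horizontal strip meets each diagonal at most once'' and ``rows $1,\dots,s$ of a diagram with diagonal length $s$ have length $\ge s$'' — are standard.
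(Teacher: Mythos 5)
Your proof is correct, and its combinatorial core coincides with the paper's: both arguments replace the main diagonal by the diagonal of boxes of content $k$ with $\Re(k+w)>0$ (via the elementary bound $\tilde s_N\le s_N+k$), precisely so that Lemma~8 and its Remark provide the conditional estimate ${\rm Prob}\{\xi_j=1\mid\mathcal F_j\}\le c(k)/j$. Where you diverge is in how this conditional estimate is converted into an almost-sure statement. The paper introduces the product (Bernoulli) measure $\nu$ with $\nu_N(\{1\})=\min(1,c_1(z,w)/N)$, proves stochastic domination $\mu\le\nu$ by an explicit hands-on coupling construction (Proposition~13 and Lemma~14), establishes a strong law of large numbers for $\nu$ via an Abel-summation/Kronecker argument and Kolmogorov's convergence theorem (Proposition~16, Lemmas~17 and~18), and finally transfers the bound back to $\mu$ using monotonicity of the relevant indicator functions. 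You instead bound ${\rm Prob}\{X_N\ge m\}$ directly by iterated conditioning, the inequality $e_m(x_1,\dots,x_n)\le(x_1+\cdots+x_n)^m/m!$ for the elementary symmetric function, and Borel--Cantelli. Your route is shorter and more elementary, bypassing the coupling and SLLN machinery entirely; its only cost is a worse (but irrelevant, since the theorem asserts only existence of) constant, roughly $2e\,c(k)$ rather than $c_1(z,w)$, and it yields only the upper bound, whereas the paper's method shows in passing that for the dominating measure the normalized sum actually converges to $c_1(z,w)$. The small imprecision in your Step~2 --- $X_N$ differs from $\sum_{j<N}\xi_j$ by the at most one box of content $k$ already present in $\tau^+(1)$ --- only shifts $m$ by one and does not affect the conclusion.
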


\begin{proof}

First we collect some necessary general definitions and facts.

Denote by $X$ the set $\{0,1\}^\infty$ of
infinite sequences of $0$'s and $1$'s. We equip $X$ with the topology of
the direct product. Consider the sigma-algebra of Borel sets on $X$.

Given $A\in X$ denote by $A_i$ the $i$th term of the sequence $A$.
We also employ the notation $A_i=q_i(A)$.

There is a natural partial order on $X$:
$A\le B$, if $A_i<B_i$ for all $i$.

We say that a real-valued function $f$ on the space of sequences is
\emph{increasing}, if from $A\le B$ follows $f(A)\le f(B)$. Denote
by $\mathbb M$ the family of all continuous increasing functions.

Consider two probability measures $\mu_1$ and $\mu_2$ on $X$. We write $\mu_1\le\mu_2$ if
$$
 E_{\mu_1} f \le E_{\mu_2} f
$$
for all functions $f\in \mathbb M$. Here $E_\mu f$ is the expectation of the function $f$ with respect
to the measure $\mu$.

Suppose $\mathcal A\subset X$. In what follows we identify $\mathcal A$ with the event ``$x\in\mathcal A$''
and denote by  $Prob_\mu{\mathcal A}$ the probability of this event with respect to the probability measure $\mu$.
We also denote by $Prob_\mu\{\mathcal A|\mathcal B\}$ a conditional probability of the event $\mathcal A$
given the event $\mathcal B$.

\begin{proposition}
 Suppose $\mu_1$ and $\mu_2$ are two probability measures on the partial-ordered
 compact metric space $X$. A necessary and sufficient condition for  $\mu_1\le \mu_2$ is that
 there exists a probability measure $\eta$ on $X\times X$ which satisfies
 $$
  Prob_\eta\{(x,y): x\in \mathcal A \}=Prob_{\mu_1}\mathcal A,
 $$
 $$
  Prob_\eta\{(x,y): y\in \mathcal A \}=Prob_{\mu_2}\mathcal A,
 $$
 for all Borel sets $\mathcal A\subset X$, and
 $$
  Prob_\eta\{(x,y):x\le y\}=1
 $$
 The measure $\eta$ is called a coupling measure for $\mu_1$ and $\mu_2$.
\end{proposition}
\begin{proof}
 We won't need necessity of the condition, its proof can be found in the book \cite{Ligg} (theorem 2.4 of the second chapter).
 Let us proof the sufficiency of the condition.

 Let $f$ be an increasing function on $X$. The definition of the measure $\eta$
 implies that $f(x)\le f(y)$ almost surely with respect to the measure $\eta$
 on the set of pairs $(x,y)\in X\times X$. It follows that
 \begin{multline*}
  E_{\mu_1}f=\int f(x) d\mu_1=\int f(x) d\eta\le\int f(y)d\eta=\int f(y) d\mu_2=E_{\mu_2}f
 \end{multline*}

\end{proof}

\begin{lemma} Suppose $\mu$ and $\nu$ are two probability
 measures on $X$, such that $\nu$ is a Bernoulli measure (i.e. product--measure)
 and the following estimates hold
  $$Prob_{\nu}\{q_n=1\}\ge Prob_\mu\{q_n=1\mid  {\mathcal A}\},$$
  for all events $\mathcal A$, which belong to the $\sigma$--algebra generated by the first $n-1$ coordinate functions
  $q_1,\dots,q_{n-1}$.

  Then $\mu\le\nu$.
\end{lemma}
\begin{proof}
 Let us construct a coupling measure $\eta$ for $\mu$ and $\nu$.
 The support of the coupling measure $\operatorname{supp}(\eta)$ consists of pairs $(A,B)\in X\times X$, such that
 $A\le B$. It is clear that there are 3 possibilities for a pair $(A_i,B_i)$: $(0,0)$,$(0,1)$,$(1,1)$. Thus,
 $$
  \operatorname{supp}(\eta)=\{Z\}^\infty,
 $$
where $Z=\{(0,0),(0,1),(1,1)\}$.

 It suffices to define the measure $\eta$ on the cylindrical sets. The base of a cylindrical set
 is a subset of $Z^n$. Thus, we should define a family of measures $\{\eta_n\}$ ($\eta_i$ is
 a probability measure on $Z^i$) compatible with natural projections $Z^n\mapsto Z^{n-1}$.

 We proceed by induction on $n$.

 First we define the measure $\eta_1$ on $Z$.

 In other words, we want to define
 3 numbers $\eta(0,0), \eta(0,1), \eta(1,1)$ in such a way that
 \begin{equation*}
  \begin{split}
   \mu(0)=&\eta(0,0)+\eta(0,1),\\
   \mu(1)=&\eta(1,1), \\
   \nu(0)=&\eta(0,0),  \\
   \nu(1)=&\eta(0,1)+\eta(1,1). \\
  \end{split}
 \end{equation*}

 (Here we denote by $\mu(a)$ the marginal distribution $Prob_\mu\{A_1=a\}$,
 and  $\eta(a,b)=\eta_1(\{(a,b)\})=Prob_\eta\{A_1=a,B_1=b\}$)

 We may view these relations as a system of linear equations defining 3 desired numbers. It is evident,that
 as $\nu(1)\ge\mu(1)$ under the hypothesis, the system has a positive solution that is not greater than $1$.

 Now we want to define the measure $\eta_2$ on $\mathbb Z^2$.

 We know the numbers $\eta(0,0), \eta(0,1), \eta(1,1)$, and we want to
 define $9$ new numbers
 \begin{align*}
   \eta(00,00), &&\eta(00,01), &&\eta(01,01),\\
   \eta(00,10), &&\eta(00,11), &&\eta(01,11),\\
   \eta(10,10), &&\eta(10,11), &&\eta(11,11).
 \end{align*}

 Let us define the first three of them using the following relations
 \begin{equation*}
  \begin{split}
   &\eta(0,0)=\eta(00,00)+\eta(00,01)+\eta(01,01),\\
   &Prob_\mu\{X_2=0\mid X_1=0\}\cdot\eta(0,0)=\eta(00,00)+\eta(00,01),\\
   &Prob_\mu\{X_2=1\mid  X_1=0\}\cdot\eta(0,0)=\eta(01,01),\\
   &Prob_\nu\{X_2=0\mid  X_1=0\}\cdot\eta(0,0)=\eta(00,00),\\
   &Prob_\nu\{X_2=1\mid  X_1=0\}\cdot\eta(0,0)=\eta(00,01)+\eta(01,01).
  \end{split}
 \end{equation*}
 By the hypothesis,
 $$Prob_\nu\{X_2=1\mid  X_1=0\}\ge Prob_\mu\{X_2=1|\quad X_1=0\},$$
 consequently, the system has a positive, not greater than $1$, solution.

 In a similar manner we define $6$ remaining numbers using the relations

 \begin{equation*}
  \begin{split}
   &\eta(0,1)=\eta(00,10)+\eta(00,11)+\eta(01,11),\\
   &Prob_\mu\{X_2=0\mid  X_1=0\}\cdot\eta(0,1)=\eta(00,10)+\eta(00,11),\\
   &Prob_\mu\{X_2=1\mid  X_1=0\}\cdot\eta(0,1)=\eta(01,11),\\
   &Prob_\nu\{X_2=0\mid  X_1=1\}\cdot\eta(0,1)=\eta(00,10),\\
   &Prob_\nu\{X_2=1\mid  X_1=1\}\cdot\eta(0,1)=\eta(00,11)+\eta(01,11).
  \end{split}
 \end{equation*}

 and

 \begin{equation*}
  \begin{split}
   &\eta(1,1)=\eta(10,10)+\eta(10,11)+\eta(11,11),\\
   &Prob_\mu\{X_2=0\mid  X_1=1\}\cdot\eta(1,1)=\eta(10,10)+\eta(10,11),\\
   &Prob_\mu\{X_2=1\mid  X_1=1\}\cdot\eta(1,1)=\eta(11,11),\\
   &Prob_\nu\{X_2=0\mid  X_1=1\}\cdot\eta(1,1)=\eta(10,10),\\
   &Prob_\nu\{X_2=1\mid  X_1=1\}\cdot\eta(1,1)=\eta(10,11)+\eta(11,11).
  \end{split}
 \end{equation*}

 It is quite clear, the measure $\eta_2$ satisfies all required conditions.
 We derive some of them as an example
 \begin{multline*}
 Prob_\mu\{X_1=1,X_2=0\}=Prob_\mu\{X_1=1\}\cdot Prob_\mu\{X_2=0|\quad X_1=1\}=\\=
  (\eta(1,1))\cdot\left(\frac{\eta(10,10)+\eta(10,11)}{\eta(1,1)}\right)=
  \eta(10,10)+\eta(10,11).
 \end{multline*}

 \begin{multline*}
 Prob_\nu\{X_1=1,X_2=1\}=Prob_\nu\{X_1=1\}\cdot Prob_\nu\{X_2=1|\quad X_1=1\}=\\=
  (\eta(0,1)+\eta(1,1))
  \cdot\left(\frac{(\eta(10,11)+\eta(11,11))+(\eta(00,11)+\eta(01,11)) }{\eta(1,1)+\eta(0,1)}\right)=\\=
   \eta(10,11)+\eta(11,11)+\eta(00,11)+\eta(01,11).
 \end{multline*}

 The remaining conditions follow similarly from the equations defining our $9$ parameters.

 The general inductive step $n\to n+1$ is quite similar.

\end{proof}

Now we return to our theorem.

The main diagonal of a Young diagram is formed by boxes of the
diagram  such that $c(\square)=0$. Consider a secondary diagonal,
corresponding to content of a box $k$, i.e. the set of boxes with
$c(\square)=k$.

Choose $k$ such that $\Re(k+w)>0$. Denote the length of the
secondary diagonal, corresponding to the box content $k$, by
$s_{N}$. It is clear that $\tilde s_{N}\le s_{N}+k$. Therefore, it
is sufficient to prove the theorem for the random variables $s_{N}$
instead of $\tilde s_{N}$ .

 Note that since $\Re(k+w)>0$ we can apply Lemma 8
to estimate  the growth of the diagonal.

Introduce a family of random variables $\xi_N=\xi_N(\tau) $ on
$\mathcal T$. If a box with content $k$ is added to the ``positive''
Young diagram when one moves from level $N-1$ to level $N$ of the
Gelfand-Tsetlin graph along the path $\tau$ (i.e., if the length of
the secondary diagonal increases by one), then $\xi_N(\tau)=1$.
Otherwise, $\xi_N(\tau)=0$. It is evident that $s_N=\sum_{i=1}^N
\xi_i$.

The joint distribution of the random variables $\xi_i$ defines in a
natural way a probability measure on the set of sequences $X$.
Denote this measure by $\mu$.

The claim of Theorem 12 can be reformulated as a property of $\mu$,
as follows. We replace the probability space $\mathcal T$ by $X$ and
the $(z,w)$--measure by $\mu$. Then the random variables $\xi_i$
turn into the coordinate functions $q_i$ . The length of the
secondary diagonal $s_N$ turns into the sum $q_1+q_2+\dots +q_N$.

\begin{proposition}
 There exists a constant $c_1(z,w)$ such that for any collection
$\{a_1,\dots,a_{N-1}\}$ of $0$'s and $1$'s
 the following estimate on the conditional probability holds
 $$
  Prob\{\xi_N=1|\xi_1=a_1,\dots,\xi_{N-1}=a_{N-1}\}\le \frac{c_1(z,w)}{N}
 $$
\end{proposition}
\begin{proof}
 Immediately follows from Lemma 8.
\end{proof}

Consider the following family $\{\nu_N\}$ of probability measures on
$\{0,1\}$:
$$\nu_N(\{1\})= \min(1,c_1(z,w)/N),$$

$$\nu_N(\{0\})= 1-\min(1,c_1(z,w)/N).$$
 Here $c_1(z,w)$ is the constant from Proposition $15$.

Now denote by $\nu$ the direct
product of the measures $\nu_N$.

Applying Proposition $15$ and Lemma $14$ we conclude that $\mu \le \nu$.

Next, we prove that an analogue of the strong law of large numbers
holds for the measure $\nu$. Thus $\nu$ satisfies the estimate
similar to the one of Theorem $12$.

\begin{proposition}
 Almost surely with respect to the measure $\nu$
 $$
  \lim_{N\to\infty}\frac{q_1+q_2+\dots+q_N}{\log (N)}=c_1(z,w)
 $$
\end{proposition}

\begin{proof}
 It is clear that $E_\nu q_i=c_1(z,w)/i$. Consequently,
 $$
  \lim_{N\to\infty}E_\nu \left(\frac{q_1+q_2+\dots+q_N}{\log (N)}\right)
  =c_1(z,w)
 $$
 Denote $p_i=q_i-E_\nu (q_i)$.
 We have to prove that almost surely with respect to the measure $\nu$
 $$
  \lim_{N\to\infty}\frac{p_1+p_2+\dots+p_N}{\log (N)}=0
 $$
 \begin{lemma}
  Suppose $b_i$ is an arbitrary numerical sequence such that the
series
  $\sum\limits_{i=1}^\infty b_i$ converges.
  Then
 $$
  \lim_{N\to\infty}\frac{b_1\cdot\log (1)+b_2\cdot\log (2)+\dots+b_N\cdot\log (N)}{\log (N)}=0
 $$
 \end{lemma}
 \begin{proof}
  We use the discrete Abel transform:
  $$
   \sum_{k=1}^M u_k v_k = \left(\sum_{i=1}^M u_i\right) v_M+ \sum_{k=1}^{M-1}
    \left(\sum_{i=1}^k u_i\right) (v_k-v_{k+1}).
  $$
  Choose $\varepsilon>0$ and apply the last formula for $M=N$, $u_i=b_i$ and $v_i=\frac{\log (i)}{\log (N)}$.
 We obtain
  \begin{multline}
  \frac{b_1\cdot\log (1)+b_2\cdot\log (2)+\dots+b_N\cdot\log (N)}{\log (N)}=\\=
   \left(\sum_{i=1}^N b_i\right) + \sum_{k=1}^{N-1}
    \left(\sum_{i=1}^k b_i\right) \left(\frac{\log (i) - \log (i+1)}{\log (N)}\right)=\\
   =\left(\sum_{i=1}^N b_i\right)+\sum_{k=L}^{N-1}
    \left(\sum_{i=1}^N b_i\right) \left(\frac{\log (i) - \log (i+1)}{\log (N)}\right)
    -\\- \sum_{k=L}^{N-1}
    \left(\sum_{i=k+1}^N b_i\right) \left(\frac{\log (i) - \log (i+1)}{\log (N)}\right)
     +\sum_{k=1}^{L-1}
    \left(\sum_{i=1}^k b_i\right) \left(\frac{\log (i) - \log (i+1)}{\log (N)}\right)=\\=
    \left(\sum_{i=1}^N b_i\right)\left(1+\sum_{k=L}^{N-1}\frac{\log (i)- \log(i+1)}{\log (N)}\right)
    -\\- \sum_{k=L}^{N-1}
    \left(\sum_{i=k+1}^N b_i\right) \left(\frac{\log (i) - \log (i+1)}{\log (N)}\right)
     +\sum_{k=1}^{L-1}
    \left(\sum_{i=1}^k b_i\right) \left(\frac{\log (i) - \log (i+1)}{\log (N)}\right)=\\=
    \left(\sum_{i=1}^N b_i\right)\left(\frac{\log (L)}{\log (N)}\right)
    - \sum_{k=L}^{N-1}
    \left(\sum_{i=k+1}^N b_i\right) \left(\frac{\log (i) - \log (i+1)}{\log (N)}\right)
     +\\+ \sum_{k=1}^{L-1}
    \left(\sum_{i=1}^k b_i\right) \left(\frac{\log (i) - \log (i+1)}{\log (N)}\right)
  \end{multline}
  By the hypothesis the series $\sum\limits_{i=1}^\infty b_i$ converges, hence, we can choose such $L$
 that for every $m,n>L$ one has
   $\left|\sum\limits_{i=m}^{n}b_i\right|\le \varepsilon/6$. This fact implies that
  the second term in the sum $(1)$ does not exceed $\varepsilon/3$ in absolute value.
   If we now choose $N$ large enough, then the first and the second terms do not exceed $\varepsilon/3$ too.
   Consequently, the sum can be bounded by $\varepsilon$.
  \end{proof}

 Introduce $t_i=\frac{p_i}{\log (i)}$.
 The last lemma implies that it is sufficient to prove almost sure convergence
 of the series $\sum\limits_{i=1}^\infty t_i$.
 Let us estimate the variance of $t_i$. We denote the variance by $D_\nu$.
 Evidently, $D_\nu q_i\le c/i$. Therefore,
 $D_\nu t_i\le \frac{c}{i \log^2(i)}$.
 Hence, the series $\sum\limits_{i=1}^\infty D_\nu t_i$ converges.
 Now we use the following lemma:
 \begin{lemma}
  Suppose $\phi_i$ is a sequence of independent centered random
variables such that $\sum\limits_{i=1}^\infty D \phi_i$ converges.
  Then the series $\sum\limits_{i=1}^\infty \phi_i$ converges almost surely.
 \end{lemma}
  See \cite[Chapter 4, Section 2, Theorem 3]{Shir}.

  This concludes the proof of Proposition $16$.
\end{proof}
 It follows from Proposition 16 that for almost every with respect to the measure $\nu$
 sequence in $X$, there exists such $N$ that for every $n>N$:
 $$
  \frac{\sum\limits_{i=1}^n q_i}{\log (n)}<c, \eqno (***)
 $$
 where $c$ is a constant depending solely on $z,w$, and  $q_i$ is the $i$th coordinate function.
 Now we show that this statement also holds for the measure $\mu$.

 Let $\mathcal A_N\subset X$ be the set where the inequality $(***)$ holds for all $n>N$.
 It is clear that for every $N$,  $\mathcal A_N\subset \mathcal A_{N+1}$.
 By Proposition 16 $$\lim\limits_{N\to\infty} Prob_{\nu}\mathcal A_N=1.$$

 \begin{lemma}
  $$Prob_{\mu}\mathcal A_N\ge Prob_{\nu}\mathcal A_N$$
 \end{lemma}
 \begin{proof}
  Denote by $\mathcal B_N^i\subset X$ the set where inequality $(***)$ holds for $n=N,N+1,\dots,N+i-1$.
  The set $\mathcal A_N$ coincides with the intersection of the decreasing sequence of the sets $\mathcal B_N^i$
  $$\mathcal A_N=\bigcap_{i=1}^\infty \mathcal B_N^i.$$

  Note that if the inequality $(***)$ holds for some sequence $A$ and $B\le A$ (with respect to the partial order
  on $X$ defined above), then the inequality also holds for $B$. Consequently, the indicator of the set
  $\mathcal B_N^i$ is a decreasing function. Thus, $\nu\ge\mu$ implies
that $Prob_{\mu}\mathcal B_N^i\ge Prob_{\nu}\mathcal B_N^i$.
  Therefore
  $$Prob_{\mu}\mathcal A_N=\lim_{i\to\infty}Prob_{\mu}\mathcal B_N^i\ge\lim_{i\to\infty}Prob_{\nu}\mathcal B_N^i=Prob_{\nu}\mathcal A_N.$$
 \end{proof}
 It follows from the lemma that
  $$\lim_{N\to\infty} Prob_{\mu}\mathcal A_N=1.$$
 Hence, for almost every with respect to the measure $\mu$, sequence in $X$ there exists a number $N$
 such that for every $n>N$  $(***)$ holds.
 The last statement is equivalent to Theorem $12$.

\end{proof}

\end{document}